\documentclass[12pt]{article}
\usepackage{amssymb,amsmath}
\usepackage{cancel}
\usepackage{palatino}
\usepackage{longtable}
\usepackage{dsfont}
\usepackage{amsthm}
\usepackage[normalem]{ulem}
\usepackage[hmargin=3cm,vmargin=3cm]{geometry}
\usepackage{color}
\usepackage{mathtools}
\numberwithin{equation}{section}
\theoremstyle{definition}
\newtheorem{theorem}{Theorem}[section]   
\newtheorem{thm}{Theorem}[section]     
\newtheorem{definition}[theorem]{Definition}
\newtheorem{deft}[theorem]{Definition}
\newtheorem{proposition}[theorem]{Proposition}

\newtheorem{prop}[theorem]{Proposition}
\newtheorem{lemma}[theorem]{Lemma}

\newtheorem{cor}[theorem]{Corollary}

\newtheorem{example}[theorem]{Example}

\newtheorem{example-notation}[theorem]{Example-Notation}

\newtheorem{rem}{Remark}


\newcommand{\eqa}{\begin{eqnarray}}
\newcommand{\eeqa}{\end{eqnarray}}
\newcommand{\beq}{\begin{equation}}
\newcommand{\eeq}{\end{equation}}

\setlength{\textwidth}{6.0in}
\setlength{\topmargin}{-0.2in}
\setlength{\textheight}{9.0 in} 
\setlength{\oddsidemargin}{.3in}
           
\setlength{\baselineskip}{2.5\baselineskip}    
\allowdisplaybreaks

\begin{document}
\title{Regular $F$-manifolds with eventual identities}
\author{Sara Perletti and Ian A.B. Strachan}


\date\today

\maketitle\thispagestyle{empty}

\vspace{-0.2in}

\begin{abstract} 
Given an $F$-manifold one may construct a dual multiplication (generalizing the idea of an almost-dual Frobenius manifold introduced by Dubrovin) using a so-called eventual identity, the definition of which ensure that the dual object is also an $F$-manifold. In this paper we solve the equations for an eventual identity for a regular (so non-semi-simple) $F$-manifold and construct a dual coordinate system in which dual multiplication is preserved. As an application, families of Nijenhuis operators are constructed.
\end{abstract}    
\addtocontents{toc}{\protect}
\tableofcontents

\section{Introduction}
\subsection{$F$-manifolds}
The definition of an $F$-manifold was given by Hertling and Manin twenty-five years ago \cite{HM99}. Originating in the the theory of Frobenius manifolds and singularity theory, it has proved to be an extremely versatile concept, finding applications in, for example, the theory of integrable systems and even information theory \cite{CombeManin}.

\begin{definition}\label{defFmani}
	An \emph{$F$-manifold} is a triple $(M,\circ,e)$ consisting of a manifold $M$, a commutative and associative multiplication $\circ$ on the tangent bundle with the properties:
\begin{itemize}
\item[(i)] for all vector fields $X,Y,W, Z\in\mathfrak{X}(M)$,
	\begin{equation}
		\mathcal{L}_{X\circ Y}(\circ)=X\circ\mathcal{L}_Y(\circ)+Y\circ\mathcal{L}_X(\circ)\,,
		\label{HMeq2}
	\end{equation}	
where $\mathcal{L}_XY = [X,Y]$ is the Lie bracket of vector fields;
\item[(ii)] there exists a distinguished vector field $e$ on $M$, the unit of $\circ$, such that ${e\circ X=X}$ for all vector fields $X\,.$
 \end{itemize}
\end{definition}

\noindent We we also require the notion of an Euler vector field:

\begin{deft}
	An \emph{Euler vector field} on an $F$-manifold $(M,\circ,e)$ is a vector field $E$ satisfying the condition
	\begin{equation}
		\mathcal{L}_E(\circ)(X,Y)=X\circ Y,\qquad X,Y\in\mathfrak{X}(M).
	\end{equation}
	This means that $E$ preserves the multiplication up to a constant.
\end{deft}
\noindent While many properties of $F$-manifolds have been derived (see \cite{He02}), the classification of $F$-manifolds remains an open problem.

\medskip

Motivated by Dubrovin's construction of \lq almost-dual\rq~Frobenius manifolds \cite{Dub04}, Manin introduced a new commutative and associative multiplication:
	\begin{equation}
		X*Y=\mathcal{E}^{-1}\circ X\circ Y,\qquad X,Y\in\mathfrak{X}(M),
		\label{dualmultiplic}
	\end{equation}
for an arbitrary invertible vector field $\mathcal{E}$ (invertible meaning that the vector field $\mathcal{E}^{-1}$ satisfies the condition $\mathcal{E}\circ \mathcal{E}^{-1}=e$). In general, $\mathcal{E}$ will not be defined everywhere on the manifold - there will be some submanifold $\Sigma$ where $\mathcal{E}$ is not invertible. For simplicity, we will just refer to the manifold as $M$ rather than separately to the manifolds $M$ and $M\backslash \Sigma$. Clearly $\mathcal{E}$ is a unit for $*$. Manin then defined an eventual identity as a vector field $\mathcal{E}$ that preserves the $F$-manifold structure \cite{Ma05}.
\begin{deft}
	An \emph{eventual identity} for an $F$-manifold $(M,\circ,e)$ is an invertible vector field $\mathcal{E}$ such that the dual multiplication (\ref{dualmultiplic}) defines an $F$-manifold structure $(M,*,\mathcal{E})$.
\end{deft}
\noindent The characterization of such eventual identities was given in \cite{DS}:
\begin{thm}\label{DStheorem}
Given an $F$-manifold $(M,\circ,e)$, an invertible vector field $\mathcal{E}$ is an eventual identity if and only if
	\begin{equation}
		\mathcal{L}_\mathcal{E}(\circ)(X,Y)=[e,\mathcal{E}]\circ X\circ Y,\qquad X\text{,}Y\in\mathfrak{X}(M)\text{.}
		\label{Lie}
	\end{equation}
\end{thm}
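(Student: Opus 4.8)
The plan is to notice first that the dual product $*$ is \emph{automatically} commutative and associative with unit $\mathcal{E}$, so that the entire content of the theorem is the Hertling--Manin relation for $*$. Indeed, commutativity is inherited from $\circ$; associativity follows from
\[
X*(Y*Z)=\mathcal{E}^{-1}\circ\mathcal{E}^{-1}\circ X\circ Y\circ Z=(X*Y)*Z;
\]
and $\mathcal{E}*X=\mathcal{E}^{-1}\circ\mathcal{E}\circ X=e\circ X=X$, so $\mathcal{E}$ is the $*$-unit. Hence $(M,*,\mathcal{E})$ is an $F$-manifold if and only if the single condition (i) holds for $*$, namely $\mathcal{L}_{X*Y}(*)=X*\mathcal{L}_Y(*)+Y*\mathcal{L}_X(*)$. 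The theorem therefore reduces to proving that this $*$-relation is equivalent to (\ref{Lie}).

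Second, I would record the two elementary identities on which everything rests. Applying the Leibniz rule of $\mathcal{L}_X$ to the triple $\circ$-product $Y*Z=\mathcal{E}^{-1}\circ Y\circ Z$ gives the \emph{cross-term formula}
\[
\mathcal{L}_X(*)(Y,Z)=(\mathcal{L}_X\mathcal{E}^{-1})\circ Y\circ Z+\mathcal{E}^{-1}\circ\mathcal{L}_X(\circ)(Y,Z)+\mathcal{L}_X(\circ)(\mathcal{E}^{-1},Y\circ Z).
\]
Separately, since $e\circ X=X$, expanding the definition of $\mathcal{L}_Y(\circ)(e,X)$ yields the \emph{unit identity}
\[
\mathcal{L}_Y(\circ)(e,X)=[e,Y]\circ X,
\]
which is precisely the mechanism by which a bracket-with-$e$ term enters the calculus (and whose $*$-analogue $\mathcal{L}_Y(*)(\mathcal{E},X)=[\mathcal{E},Y]*X$ holds for the same reason). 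Finally, differentiating the defining relation $\mathcal{E}\circ\mathcal{E}^{-1}=e$ along $X$ expresses $\mathcal{L}_X\mathcal{E}^{-1}$ in terms of $\mathcal{L}_X\mathcal{E}$, $[X,e]$ and $\mathcal{L}_X(\circ)(\mathcal{E},\mathcal{E}^{-1})$; this is where the raw bracket $[X,e]$, and ultimately $[e,\mathcal{E}]$, is produced.

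Third comes the computation proper. I would substitute the cross-term formula into both sides of the Hertling--Manin relation for $*$ and invoke the relation (\ref{HMeq2}) already satisfied by $\circ$ to cancel every \lq\lq pure $\circ$\rq\rq\ contribution. Using the unit identity and the $\mathcal{E}/\mathcal{E}^{-1}$ relation to rewrite the surviving terms, I expect everything to collapse, up to an invertible $\circ$-factor (some power of $\mathcal{E}^{-1}$), into the single symmetric tensor
\[
A(X,Y):=\mathcal{L}_\mathcal{E}(\circ)(X,Y)-[e,\mathcal{E}]\circ X\circ Y.
\]
Because $\mathcal{E}^{-1}$ is invertible, the vanishing of this surviving part for all arguments is equivalent to $A\equiv 0$, which is exactly (\ref{Lie}). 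For the \lq\lq only if\rq\rq\ direction one may alternatively specialize the arguments of the $*$-relation (for instance inserting the unit $e$ and applying the unit identity) to isolate $A$ more directly, and then run the same chain of substitutions backwards to obtain the converse.

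The main obstacle I anticipate is organizational rather than conceptual: controlling the large number of terms generated by the triple-product Leibniz expansions and verifying that they reassemble into a single, scalar-free multiple of $A$. The delicate point is extracting \emph{exactly} the combination $[e,\mathcal{E}]$ rather than some other bracket such as $[\mathcal{E},X]$; this forces one to apply the unit identity at precisely the right slot and to use $\mathcal{E}\circ\mathcal{E}^{-1}=e$ to convert every stray $\mathcal{L}_X\mathcal{E}^{-1}$. A convenient safeguard throughout is to exploit the full symmetry of $\mathcal{L}_X(\circ)$ in its two lower arguments together with the commutativity and associativity of $\circ$, so that all products may be freely reordered and the cross terms matched off in pairs.
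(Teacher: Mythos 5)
First, note that the paper does not prove this theorem at all: it is imported verbatim from the reference \cite{DS}, so there is no in-paper argument to compare against. Your overall strategy does coincide with the one used in \cite{DS}: observe that $*$ is automatically commutative and associative with unit $\mathcal{E}$ (your verification of this is correct), so that the theorem reduces to showing that the Hertling--Manin identity \eqref{HMeq2} for $*$ is equivalent to \eqref{Lie}; and the auxiliary identities you record --- $\mathcal{L}_Y(\circ)(e,X)=[e,Y]\circ X$ and the formula for $\mathcal{L}_X\mathcal{E}^{-1}$ obtained by differentiating $\mathcal{E}\circ\mathcal{E}^{-1}=e$ --- are exactly the right tools and are correctly stated.

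However, as a proof the proposal has a genuine gap: the decisive step is asserted rather than performed. The entire mathematical content of the theorem is the identity expressing the four-argument Hertling--Manin defect of $*$, namely $\mathcal{L}_{X*Y}(*)(Z,W)-X*\mathcal{L}_Y(*)(Z,W)-Y*\mathcal{L}_X(*)(Z,W)$, in terms of the corresponding defect of $\circ$ plus an invertible $\circ$-multiple of $A(Z,W)=\mathcal{L}_{\mathcal{E}}(\circ)(Z,W)-[e,\mathcal{E}]\circ Z\circ W$; you write only that you ``expect everything to collapse'' into such a form. This is a multi-page computation in \cite{DS}, not a routine cancellation, and without the explicit formula one cannot conclude either direction. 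Your proposed shortcut for necessity is also insufficient as stated: specializing the $*$-relation at the $*$-unit (i.e.\ at $\mathcal{E}$) only yields $\mathcal{L}_{\mathcal{E}}(*)=0$, which unwinds to
\begin{equation*}
\mathcal{L}_{\mathcal{E}}(\circ)(\mathcal{E}^{-1},Z\circ W)+[\mathcal{E},\mathcal{E}^{-1}]\circ Z\circ W+\mathcal{E}^{-1}\circ\mathcal{L}_{\mathcal{E}}(\circ)(Z,W)=0,
\end{equation*}
a strictly weaker condition than \eqref{Lie} (it is one $\circ$-contraction of $A$, not $A$ itself). So the ``only if'' direction genuinely requires the full four-argument identity with general arguments, and the proposal as written does not establish it. To complete the proof you must carry out the Leibniz expansion explicitly, isolate the exact coefficient of $A$, and check that setting the spare arguments equal to $e$ lets the invertibility of $\mathcal{E}^{-1}$ force $A\equiv 0$.
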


\medskip

\noindent The multiplication $\circ$ is said to be semi-simple if, at a generic point in the manifold, there exists a set of idempotent vector fields $\{\delta_i\}_{i=1,\dots,n}$ with the property
\[
\delta_i \circ \delta_j = \delta_{ij} \delta_i,\qquad i,j=1,\dots,n,
\]
i.e. the multiplication decomposes into one-dimensional blocks. It may be shown that canonical coordinates $\{u^i\}_{i=1,\dots,n}$ exists in which the idempotents are coordinate vector fields $\delta_i=\partial_i\,$, ${i=1,\dots,n}$. In such a coordinate system it is straightforward to construct eventual identities by solving the conditions given in the above theorem (see \cite{DS}), giving
\[
\mathcal{E}=\overset{n}{\underset{i=1}{\sum}}\mathcal{E}^i(u^i)\frac{\partial}{\partial u^i}\text{.}
\]
Thus, if the multiplication is semi-simple, eventual identities are defined by $n$-functions of one-variable. We will often denote $\frac{\partial}{\partial u^i}=\partial_{u^i}$, $i=1,\dots,n$.

\medskip

Without semi-simplicity, it is much harder to study $F$-manifolds: indeed, there is no classification of $F$-manifolds beyond two dimensions, with only a partial classification in three dimensions \cite{BH}. In \cite{DH16} a so-called regular-$F$-manifold was defined.

\begin{deft}
An $F$-manifold $(M,\circ,e,E)$ with Euler field is called \emph{regular} at a point $m\in M$ if the endomorphism $E\circ|_m:T_mM\to T_mM$ is regular, namely each of its Jordan blocks is associated to a different eigenvalue. The $F$-manifold is \emph{(generically) regular} if it is regular at any (generic) point.
\end{deft}
\noindent Furthermore, an extension of the idea of canonical coordinates was given. Let $(M,\circ,e,E)$ be a regular $F$-manifold of dimension $n$ with the operator $E\circ$ having $r$-Jordan blocks each of size $m_1,\dots,m_r$ . In (generalized) canonical coordinates
	$$\{u^{j(\alpha)}\,|\,\alpha=1,\dots,r,\,j=1,\dots,m_\alpha\}$$
	the structure constants of the product read
	\begin{equation}
		c^{i(\alpha)}_{j(\beta)k(\gamma)}=\delta^\alpha_\beta\delta^\alpha_\gamma\delta^i_{j+k-1}
		\notag
	\end{equation}
	and the components of the unit vector field are $e^{i(\alpha)}=\delta^i_1\,,$ where
	\[
	j(\alpha)=j+(1-\delta^\alpha_1)\overset{\alpha-1}{\underset{\sigma=1}{\sum}}m_\sigma\text{,}
	\]	
	for all suitable indices.

\subsection{The structure of the paper}

The main aim of this paper is to solve the eventual identity equations \eqref{Lie} for regular $F$-manifolds. It turns out that the structure of such fields is considerable more complicated than in the semi-simple case. Since a regular $F$-manifold consists over several Jordan blocks, we first, in Section 2, solve the equations for an eventual identity $\mathcal{E}$ for a single Jordan block. We then examine the dual structure defined via the multiplication \eqref{dualmultiplic}, constructing a new basis of vector fields with respect to which the dual product preserves the structure of the original product on the $F$-manifold in terms of the Jordan block arrangement of the operator $\mathcal{E}\circ$\,, and in Section 3 we extend these considerations to the general case of multiple Jordan blocks.

\section{Eventual identities and dual coordinates for a single Jordan block}
We first solve the equations for an eventual identity for a single Jordan block of size $n$, where $r=1$, dropping the Greek indices for ease of notation. After this, we construct - via Frobenius' theorem - a dual coordinate system in which the dual multiplication takes the same form as the original multiplication. This is a subtle point that does not exists in the semi-simple case, and involves placing a constraint on an otherwise-free function.

\subsection{Eventual identities for a single Jordan block}
\begin{example}
	Let us consider the case of one Jordan block in dimension $n=3$. Condition \eqref{Lie} gives
	\begin{equation}
		\mathcal{L}_\mathcal{E}(\circ)(\partial_i,\partial_j)=[e,\mathcal{E}]\circ \partial_i\circ \partial_j
		\label{Lie_ex3}
	\end{equation}
	for $i,j=1,2,3$, where
	\begin{align}
		\partial_1\circ\partial_i&=\partial_i,\qquad i=1,2,3,\notag\\
		\partial_2\circ\partial_2&=\partial_3,\notag\\
		\partial_2\circ\partial_3&=\partial_3\circ\partial_3=0\notag.
	\end{align}
	Condition \eqref{Lie_ex3} holds trivially when $i=1$ or $j=1$. For $i=j=\partial_3$, it amounts to
	\begin{equation}
		\partial_3\mathcal{E}^1=0,\label{Ex3_I}
	\end{equation}
	while for $i=2$ and $j=3$ (equivalently, $i=3$ and $j=2$) it amounts to
	\begin{equation}
		\partial_3\mathcal{E}^2=-\partial_2\mathcal{E}^1.\label{Ex3_II}
	\end{equation}
	For $i=j=2$, condition \eqref{Lie_ex3} gives
	\begin{equation}
		\partial_3\mathcal{E}^2=2\,\partial_2\mathcal{E}^1\label{Ex3_III}
	\end{equation}
	and
	\begin{equation}
		\partial_3\mathcal{E}^3=2\,\partial_2\mathcal{E}^2-\partial_1\mathcal{E}^1.\label{Ex3_IV}
	\end{equation}
	By comparing \eqref{Ex3_II} with \eqref{Ex3_III}, we get
	\begin{equation}
		\partial_3\mathcal{E}^2=\partial_2\mathcal{E}^1=0,\label{Ex3_V}
	\end{equation}
	which together with \eqref{Ex3_I} imposes the requirement for $\mathcal{E}^1$ to be a function of the sole variable $u^1$ and for $\mathcal{E}^2$ to be a function of the sole variables $u^1$, $u^2$. Condition \eqref{Ex3_IV} demands the function $\mathcal{E}^3$ to be of the form
	\begin{equation}
		\mathcal{E}^3=\big(2\,\partial_2\mathcal{E}^2-\partial_1\mathcal{E}^1\big)u^3+f_3(u^1,u^2)
		\notag
	\end{equation}
	for some function $f_3$ of $u^1$, $u^2$.
\end{example}
The main result in this section is:
\begin{proposition}\label{formofE}
	An invertible vector field $\mathcal{E}\in\mathfrak{X}(M)$ is an eventual identity if and only if
	\begin{equation}
		\partial_l\mathcal{E}^m=\begin{cases}
			(l-1)\,\partial_2\mathcal{E}^{m-l+2}-(l-2)\,\partial_1\mathcal{E}^{m-l+1}\qquad&\text{for }l\leq m\\
			0\qquad&\text{for }l>m
		\end{cases}
	\label{Atlas}
	\end{equation}
	for each $m=1,\dots,n$.
\end{proposition}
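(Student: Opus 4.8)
The plan is to translate the coordinate-free criterion of Theorem~\ref{DStheorem} into an explicit scalar system in the canonical frame $\partial_1,\dots,\partial_n$ of the single Jordan block and then to show that this system is equivalent to the recursion \eqref{Atlas}. Throughout, any symbol $\mathcal{E}^k$, $\partial_k$ or $\partial_k\mathcal{E}^m$ carrying an index outside $\{1,\dots,n\}$ is read as zero. I would first expand
\[
(\mathcal{L}_{\mathcal{E}}\circ)(\partial_i,\partial_j)=[\mathcal{E},\partial_i\circ\partial_j]-[\mathcal{E},\partial_i]\circ\partial_j-\partial_i\circ[\mathcal{E},\partial_j],
\]
and insert the single-block rules $\partial_i\circ\partial_j=\partial_{i+j-1}$ and $e=\partial_1$. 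Writing $\mathcal{E}=\sum_a\mathcal{E}^a\partial_a$ and projecting condition \eqref{Lie} onto $\partial_m$ turns it into the master system
\[
\partial_i\mathcal{E}^{m-j+1}+\partial_j\mathcal{E}^{m-i+1}-\partial_{i+j-1}\mathcal{E}^m=\partial_1\mathcal{E}^{m-i-j+2},
\]
call it $(\star)$, required for all $i,j,m$. Since $(\star)$ is a tautology whenever $i=1$ or $j=1$, the content lies in $i,j\ge2$; for $n=3$ these instances are precisely \eqref{Ex3_I}--\eqref{Ex3_IV}.

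For the implication \eqref{Atlas}$\Rightarrow$ eventual identity I would substitute the recursion into $(\star)$. The clean structural point is that in all four terms the derivative index does not exceed the upper index precisely when $i+j-1\le m$, so the four terms always fall in the same branch of \eqref{Atlas}. If $i+j-1>m$ every term vanishes and $(\star)$ reads $0=0$; if $i+j-1\le m$ then all four derivative and upper indices lie in $\{1,\dots,n\}$, each derivative expands through the first branch with common arguments $\partial_2\mathcal{E}^{m-i-j+3}$ and $\partial_1\mathcal{E}^{m-i-j+2}$, the coefficient of the former cancelling as $(i+j-2)-(i-1)-(j-1)=0$ and that of the latter reproducing exactly the right-hand side of $(\star)$.

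For the converse I would extract \eqref{Atlas} from $(\star)$. Setting $i=2$ yields the recursion $\partial_{j+1}\mathcal{E}^m=\partial_2\mathcal{E}^{m-j+1}+\partial_j\mathcal{E}^{m-1}-\partial_1\mathcal{E}^{m-j}$, and induction on $l=j+1$ (the cases $l=1,2$ being tautological) gives $\partial_l\mathcal{E}^m=(l-1)\partial_2\mathcal{E}^{m-l+2}-(l-2)\partial_1\mathcal{E}^{m-l+1}$ for all $l,m$. For $l\ge m+2$ both arguments leave the index range, so this is already the vanishing second branch; for $l\le m$ it is the first branch; and the boundary case $l=m+1$ collapses to $\partial_{m+1}\mathcal{E}^m=m\,\partial_2\mathcal{E}^1$. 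Thus everything reduces to the single identity $\partial_2\mathcal{E}^1=0$, which I would read off from the top of the block: the recursion with $j=n$, $m=n$ has vanishing left-hand side $\partial_{n+1}\mathcal{E}^n=0$, and inserting $\partial_n\mathcal{E}^{n-1}=(n-1)\partial_2\mathcal{E}^1$ forces $n\,\partial_2\mathcal{E}^1=0$.

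I expect the main obstacle to be the index bookkeeping at the two ends of the block rather than any isolated hard computation: one must check that the two branches of \eqref{Atlas} really switch together in $(\star)$, and that the inductive derivation survives as indices cross $1$ and $n$. The one genuinely non-formal ingredient is the boundary relation $\partial_2\mathcal{E}^1=0$, since both the vanishing branch of \eqref{Atlas} and its consistency with the full system $(\star)$ ultimately rest on the out-of-range terms generated at the top index $n$.
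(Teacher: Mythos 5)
Your proposal is correct and follows essentially the same route as the paper: the same master scalar system (the paper's \eqref{AlmostAtlas}), the forward implication by direct substitution and cancellation of coefficients, and a converse driven by the $i=2$ specialization together with the boundary identity $\partial_2\mathcal{E}^1=0$ extracted at the top of the block exactly as in \eqref{AA_step2_1}--\eqref{AA_functionaltostep2}. The only difference is organizational: your single induction on $l$ with a uniform out-of-range-is-zero convention subsumes the paper's separate inductions for the cases $l\geq m+2$, $l=m+1$, $l=m$ and $l<m$, which is a mild streamlining rather than a different argument.
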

\begin{proof}
	Consider \eqref{Lie} with $X=\partial_j$ and $Y=\partial_k$. This gives
		\begin{align}
		&\mathcal{L}_\mathcal{E}(\partial_j\circ\partial_k)-[\mathcal{E},\partial_j]\circ\partial_k-[\mathcal{E},\partial_k]\circ\partial_j=[e,\mathcal{E}]\circ \partial_j\circ \partial_k
		\notag
	\end{align}
		which simplifies, for each $i=1,\dots,n$, to
	\begin{align}
		&-\partial_{j+k-1}\mathcal{E}^i+\partial_j\mathcal{E}^{i-k+1}+\partial_k\mathcal{E}^{i-j+1}=\partial_1\mathcal{E}^{i-j-k+2}\text{.}
		\label{AlmostAtlas}
	\end{align}
	For notational simplicity we do not specify the range of the indices - assuming that $\mathcal{E}^i=0$ if the label falls outside the range $1\,,\ldots\,,n\,.$ Condition \eqref{AlmostAtlas} immediately follows from \eqref{Atlas}. We now show that \eqref{AlmostAtlas} implies \eqref{Atlas}. Let us fix $m\leq n$. First, we show that $\partial_l\mathcal{E}^m=0$ for each $l>m$. In particular, we first deal with the case $l\geq m+2$ and then with $l=m+1$. By choosing $k=2$ and $j\geq i+1$ in \eqref{AlmostAtlas}, we get
	\begin{align}
		&\partial_{j+1}\mathcal{E}^i=\partial_j\mathcal{E}^{i-1},\qquad j\geq i+1\text{.}
		\label{AlmostAtlas_reduced}
	\end{align}
	In particular, for $i=1$ we obtain $\partial_{j+1}\mathcal{E}^1=0$ for each $j\geq2$, that is $\partial_{l}\mathcal{E}^1=0$ for each $l\geq3$. By inductively assuming that $\partial_{l}\mathcal{E}^m=0$ for each $l\geq m+2$ for each $m\leq i-1$ for a fixed $i\leq n-1$, we deduce that $\partial_{l}\mathcal{E}^i=0$ for each $l\geq i+2$ by choosing $j=l-1$ in \eqref{AlmostAtlas_reduced}. This proves that $\partial_l\mathcal{E}^m=0$ for each $l\geq m+2$. By choosing $k=i$ and $j=2$ in \eqref{AlmostAtlas}, we get
	\begin{align}
		&\partial_{i+1}\mathcal{E}^i=\partial_2\mathcal{E}^{1}+\partial_i\mathcal{E}^{i-1},
		\label{AlmostAtlas_reduced_bis}
	\end{align}
	which gives
	\begin{align}
		&\partial_2\mathcal{E}^{1}=-\partial_n\mathcal{E}^{n-1}
		\label{AA_step2_1}
	\end{align}
	for $i=n$ and
	\begin{align}
		&\partial_2\mathcal{E}^{1}=\partial_{i+1}\mathcal{E}^i-\partial_i\mathcal{E}^{i-1}
		\label{AA_step2_2}
	\end{align}
	for $i\leq n-1$. From \eqref{AA_step2_2}, one can prove by induction that
	\begin{align}
		&\partial_{i+1}\mathcal{E}^i=i\,\partial_2\mathcal{E}^1,\qquad i=1,\dots,n-1.
		\label{AA_functionaltostep2}
	\end{align}
	In order to prove that $\partial_{l}\mathcal{E}^m=0$ when $l=m+1$ for higher values of $m$, it is then enough to show that $\partial_2\mathcal{E}^1=0$. By choosing $i=n-1$ in \eqref{AA_functionaltostep2}, we get
	\begin{align}
		&\partial_{n}\mathcal{E}^{n-1}=(n-1)\,\partial_2\mathcal{E}^1,
		\notag
	\end{align}
	which together with \eqref{AA_step2_1} implies $\partial_2\mathcal{E}^1=0$.	We then proved that $\partial_l\mathcal{E}^m=0$ for each $l\geq m+1$. We are now left with showing that
	\begin{equation}
		\partial_l\mathcal{E}^m=(l-1)\,\partial_2\mathcal{E}^{m-l+2}-(l-2)\,\partial_1\mathcal{E}^{m-l+1}\notag
	\end{equation}
	for each $l\leq m$. We again proceed by induction over $l$, starting from $l=m$.
	\begin{itemize}
		\item[$\bullet$] {\bf l=m} We need to prove
		\begin{equation}
			\partial_m\mathcal{E}^m=(m-1)\partial_2\mathcal{E}^{2}-(m-2)\partial_1\mathcal{E}^1\text{.}
			\label{diffmax}
		\end{equation}
		We proceed by induction over $m$.
		\begin{itemize}
			\item[$\bullet$] {\bf m=2} Condition \eqref{diffmax} trivially holds for $m=2$.
			\item[$\bullet$] {\bf m=h-1} Let us suppose $\partial_m\mathcal{E}^m=(m-1)\partial_2\mathcal{E}^{2}-(m-2)\partial_1\mathcal{E}^1$ for each ${m=2,\dots,h-1}$ (for some fixed $h\geq3$).
			\item[$\bullet$] {\bf m=h} By choosing $i=h$, $j=h-1$, $k=2$ in \eqref{AlmostAtlas}, we obtain
			\begin{align}
				&-\partial_{h}\mathcal{E}^{h}+\partial_{h-1}\mathcal{E}^{h-1}+\partial_{2}\mathcal{E}^{2}=\partial_{1}\mathcal{E}^{1}
				\notag
			\end{align}
			which by inductive hypothesis yields
			\begin{align}
				\partial_{h}\mathcal{E}^{h}&=\partial_{h-1}\mathcal{E}^{h-1}+\partial_{2}\mathcal{E}^{2}-\partial_{1}\mathcal{E}^{1}=(h-1)\partial_2\mathcal{E}^{2}-(h-2)\partial_1\mathcal{E}^1\text{.}
				\notag
			\end{align}
		\end{itemize}
		Condition \eqref{diffmax} is proved.
		\item[$\bullet$] {\bf l=t+1} Let us suppose
		\[
		\partial_l\mathcal{E}^m=(l-1)\,\partial_2\mathcal{E}^{m-l+2}-(l-2)\,\partial_1\mathcal{E}^{m-l+1}
		\]
		for every $l\geq t+1$, for some fixed $t\geq2$.
		\item[$\bullet$] {\bf l=t} We show
		\[
		\partial_t\mathcal{E}^m=(t-1)\,\partial_2\mathcal{E}^{m-t+2}-(t-2)\,\partial_1\mathcal{E}^{m-t+1}\text{.}
		\]
		By choosing $i=m+1$, $j=t$, $k=2$ in \eqref{AlmostAtlas}, we obtain 
		\begin{equation}
			\partial_{t+1}\mathcal{E}^{m+1}-\partial_t\mathcal{E}^{m}=\partial_2\mathcal{E}^{m-t+2}-\partial_1\mathcal{E}^{m-t+1}
			\label{diffshift}
		\end{equation}
		that yields
		\begin{equation}
			\partial_t\mathcal{E}^{m}=\partial_{t+1}\mathcal{E}^{m+1}-\partial_2\mathcal{E}^{m-t+2}+\partial_1\mathcal{E}^{m-t+1}=(t-1)\,\partial_2\mathcal{E}^{m-t+2}-(t-2)\,\partial_1\mathcal{E}^{m-t+1}
			\notag
		\end{equation}
		by the inductive hypothesis.
	\end{itemize}
\end{proof}                      

The above proposition leads to the following result.
\begin{theorem}
	Given a regular $F$-manifold with a single Jordan block with (generalized) canonical coordinates $u^1,\dots,u^n$ where the structure constants of the product and the components of the unit vector field respectively read $c^i_{jk}=\delta^i_{j+k-1}$ and $e^i=\delta^i_1$, an eventual identity must be of the form
	\[
	\mathcal{E}=\overset{n}{\underset{i=1}{\sum}}\mathcal{E}^{i}(u^{1},\dots,u^{i})\frac{\partial}{\partial u^{i}}
	\]
	where the functions $\{\mathcal{E}^{i}\}_{i=1,\dots,n}$ are solutions to \eqref{Atlas}.
\end{theorem}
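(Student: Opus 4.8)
The plan is to read the theorem off directly from Proposition \ref{formofE}, which has already characterized eventual identities for a single Jordan block through the system \eqref{Atlas}. In the given (generalized) canonical coordinates any vector field expands as $\mathcal{E}=\sum_{i=1}^n \mathcal{E}^i\,\partial_{u^i}$, so the only two things to establish are the claimed coordinate dependence of each component $\mathcal{E}^i$ and the assertion that the $\mathcal{E}^i$ solve \eqref{Atlas}.

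The second of these is immediate: by Proposition \ref{formofE} an invertible vector field is an eventual identity if and only if its components satisfy \eqref{Atlas}, which is precisely the statement that $\{\mathcal{E}^i\}_{i=1,\dots,n}$ are solutions to \eqref{Atlas}. For the dependence claim I would isolate the second branch of \eqref{Atlas}, namely $\partial_l\mathcal{E}^m=0$ for every $l>m$. Fixing $m$ and letting $l$ range over $m+1,\dots,n$, this asserts that $\mathcal{E}^m$ has vanishing partial derivative with respect to each of the coordinates $u^{m+1},\dots,u^n$; hence $\mathcal{E}^m$ depends only on $u^1,\dots,u^m$, that is $\mathcal{E}^m=\mathcal{E}^m(u^1,\dots,u^m)$. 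Substituting this back into the expansion produces the displayed triangular form, completing the argument.

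There is essentially no obstacle here, since the theorem is a reformulation of Proposition \ref{formofE}; the only points requiring care are bookkeeping. One should confirm that the convention $\mathcal{E}^i=0$ for indices outside $1,\dots,n$, used in the proof of the proposition, does not affect the dependence statement, and that the invertibility hypothesis is consistent with the triangular form. The latter holds because, for the product $c^i_{jk}=\delta^i_{j+k-1}$ with unit $e=\partial_{u^1}$, invertibility of $\mathcal{E}$ amounts to the single condition $\mathcal{E}^1\neq 0$; taking $m=1$ in the second branch of \eqref{Atlas} already forces $\mathcal{E}^1=\mathcal{E}^1(u^1)$, so this requirement is compatible with, and does not interfere with, the coordinate dependence just derived.
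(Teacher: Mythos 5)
Your proposal is correct and follows exactly the route the paper takes: the theorem is presented there as an immediate consequence of Proposition \ref{formofE}, with the coordinate dependence $\mathcal{E}^m=\mathcal{E}^m(u^1,\dots,u^m)$ read off from the branch $\partial_l\mathcal{E}^m=0$ for $l>m$. Your additional remarks on invertibility ($\mathcal{E}^1\neq 0$ from the lower-triangular form of $\mathcal{E}\circ$) are accurate bookkeeping that the paper leaves implicit.
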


It is straightforward to check the consistence of the system of equations for the components of $\mathcal{E}$ so \eqref{Atlas} gives a compatible system of PDEs, namely
	\begin{equation}
		\partial_i\partial_j\mathcal{E}^m=\partial_j\partial_i\mathcal{E}^m,\qquad\qquad i,j=1,\dots,n,
		\label{compatEJ=1}
	\end{equation}
for each $m=1,\dots,n$. 
	
Given the triangular nature of these equations, one may solve them term-by-term, and the first few terms are
	
\begin{align}
		\mathcal{E}^1&=f_1(u^1)\notag\\
		\mathcal{E}^2&=f_2(u^1,u^2)\notag\\
		\mathcal{E}^3&=(2\partial_2f_2-f_1')u^3+f_3(u^1,u^2)\notag\\
		\mathcal{E}^4&=(3\partial_2f_2-2f_1')u^4+2(\partial_2^2f_2)(u^3)^2+(2\partial_2f_3-\partial_1f_2)u^3+f_4(u^1,u^2)\notag\\
		\mathcal{E}^5&=(4\partial_2f_2-3f_1')u^5+(3\partial_2f_3-2\partial_1f_2)u^4+6(\partial_2^2f_2)u^3u^4\notag\\&+\frac{4}{3}(\partial_2^3f_2)(u^3)^3+\frac{1}{2}\big(4\partial_2^2f_3-4\partial_1\partial_2f_2+f_1''\big)(u^3)^2\notag\\&+(2\partial_2f_4-\partial_1f_3)u^3+f_5(u^1,u^2)\notag
	\end{align}
for some functions $\{f_i\}_{i=1,\dots,5}$ of the coordinates $u^1,u^2$. This structure persists, giving:

\begin{prop} 
An eventual identity is determined by one function $f_1$ of the coordinate $u^1$ and $n-1$ functions $\{f_i\}_{i=2,\dots,n}$ of the coordinates $u^1,u^2$. Moreover,
the components of an eventual identity have the following properties:
\begin{align}
		\mathcal{E}^1&=f_1(u^1)\,,\notag\\
		\mathcal{E}^2&=f_2(u^1,u^2)\,,\notag
\end{align}
where $f_1$ and $f_2$ are arbitrary functions and, for $i\geq 3$, 
\[
\mathcal{E}^i = f_i(u_1,u_2) + 
\left\{ \begin{array}{c}
{\rm polynomial~in~}u_3\,,\dots\,,u_i{\rm~with~coefficients}\cr{\rm~depending~on~the~functions~}f_1\,,\ldots\,,f_{i-1}\cr {\rm and~their~derivatives}
\end{array}\right\}.
\]

\end{prop}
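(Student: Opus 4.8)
The plan is to argue by strong induction on the superscript $m$, using the explicit form \eqref{Atlas} of the eventual-identity equations together with the compatibility \eqref{compatEJ=1}. The starting observation is that \eqref{Atlas} carries content only in two regimes: for $l>m$ it forces $\partial_l\mathcal{E}^m=0$, whereas for $3\le l\le m$ it expresses $\partial_l\mathcal{E}^m$ through $\partial_2\mathcal{E}^{m-l+2}$ and $\partial_1\mathcal{E}^{m-l+1}$, both of which carry a superscript strictly smaller than $m$ (since $l\ge 3$). For $l=1$ and $l=2$ the relation reduces to a tautology, so the dependence of each $\mathcal{E}^m$ on $u^1$ and $u^2$ is left entirely free. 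The base cases are immediate: for $m=1$ the relations $\partial_l\mathcal{E}^1=0$ ($l\ge2$) give $\mathcal{E}^1=f_1(u^1)$, and for $m=2$ the relations $\partial_l\mathcal{E}^2=0$ ($l\ge3$) give $\mathcal{E}^2=f_2(u^1,u^2)$, with $f_1,f_2$ unconstrained apart from the invertibility requirement $\mathcal{E}^1\neq 0$.

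For the inductive step I would assume that for every $k<m$ the component $\mathcal{E}^k$ has the asserted shape, i.e. $\mathcal{E}^k=f_k(u^1,u^2)+Q_k$ with $Q_k$ a polynomial in $u^3,\dots,u^k$ whose coefficients are functions of $u^1,u^2$ assembled from $f_1,\dots,f_{k-1}$ and their derivatives (with $\mathcal{E}^1,\mathcal{E}^2$ as in the base case). Fixing $3\le l\le m$ and inserting this hypothesis into \eqref{Atlas}, the right-hand side $(l-1)\partial_2\mathcal{E}^{m-l+2}-(l-2)\partial_1\mathcal{E}^{m-l+1}$ involves only components of index $\le m-1$, differentiated by $\partial_1$ or $\partial_2$; since neither $\partial_1$ nor $\partial_2$ introduces any dependence on $u^3,\dots,u^n$ and both preserve the polynomial form, each $\partial_l\mathcal{E}^m$ with $l\ge3$ is a polynomial in $u^3,\dots,u^{m-1}$ whose coefficients are functions of $u^1,u^2$ built from $f_1,\dots,f_{m-1}$ and their derivatives. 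In particular no $u^m$ occurs on the right-hand side.

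It then remains to reconstruct $\mathcal{E}^m$ from the prescribed derivatives $\{\partial_l\mathcal{E}^m\}_{l\ge3}$, the derivatives $\partial_1\mathcal{E}^m$ and $\partial_2\mathcal{E}^m$ being unconstrained. This is exactly the point at which \eqref{compatEJ=1} is used: the mixed-derivative identities guarantee that the overdetermined system is closed, so it can be integrated, and the integration of polynomials in $u^3,\dots,u^{m-1}$ along the directions $u^3,\dots,u^m$ produces a polynomial $Q_m$ in $u^3,\dots,u^m$, again with coefficients depending only on $f_1,\dots,f_{m-1}$ and their derivatives. Any two solutions differ by a function annihilated by all $\partial_l$ with $l\ge3$, hence by a function of $u^1,u^2$ alone; calling this function $f_m(u^1,u^2)$ yields $\mathcal{E}^m=f_m(u^1,u^2)+Q_m$, completing the induction. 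Tallying the free data across $m=1,\dots,n$ then gives precisely one function $f_1$ of $u^1$ together with $n-1$ functions $f_2,\dots,f_n$ of $u^1,u^2$.

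The main obstacle I anticipate is the bookkeeping in the integration step: one must check that integrating the closed system in the several directions $u^3,\dots,u^m$ indeed returns a polynomial rather than something transcendental, and that all the constants of integration collapse into the single two-variable function $f_m$. The compatibility \eqref{compatEJ=1} is what makes this path-independent integration legitimate, and the stability of the polynomial form under $\partial_1,\partial_2$ and under integration in $u^3,\dots,u^m$ is what guarantees that the claimed structure propagates from one value of $m$ to the next.
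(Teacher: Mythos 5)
Your argument is correct and follows essentially the same route as the paper's proof: induction on the superscript, using the fact that for $l\geq 3$ equation \eqref{Atlas} expresses $\partial_l\mathcal{E}^m$ through $\partial_1$- and $\partial_2$-derivatives of lower-index components, which by the inductive hypothesis lie in the class of polynomials in $u^3,\dots,u^{m-1}$ with coefficients in $u^1,u^2$, so that integration returns a polynomial plus a free function $f_m(u^1,u^2)$. Your treatment is if anything slightly more explicit than the paper's about the role of the compatibility conditions \eqref{compatEJ=1} in justifying the integration step.
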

\begin{proof}
	For each $m=1,\dots,n$, let us denote by $\mathcal{P}_{1,2}^{3,\dots,m}$ the set of polynomial functions in the variables $\{u^i\}_{i=3,\dots,m}$ with coefficients being functions of the coordinates $u^1,u^2$. We want to prove that for each $m=1,\dots,n$ we have
	\begin{align}
		\mathcal{E}^m=P^m(u^1,u^2;u^3,\dots,u^m)+f_m(u^1,u^2)
		\label{Epoly12}
	\end{align}
	for some function $f_m$ of $u^1,u^2$ and a function $P^m(u^1,u^2;u^3,\dots,u^m)\in\mathcal{P}_{1,2}^{3,\dots,m}$ which is uniquely determined up to $f_1,\dots,f_{m-1}$. The above example proves \eqref{Epoly12} for $m\leq5$. Let us assume \eqref{Epoly12} holds for $m\leq M-1$, for some fixed $M\leq n$, and show it holds for $m=M$. For each $l\geq3$ (without loss of generality $l\leq M$) we have
	\begin{align}
		\partial_l\mathcal{E}^M&\overset{\eqref{Atlas}}{=}(l-1)\,\partial_2\mathcal{E}^{M-l+2}-(l-2)\,\partial_1\mathcal{E}^{M-l+1}
		\notag
	\end{align}
	thus, since $M-l+2\leq M-1$, by induction we obtain 
	\begin{align}
		\partial_l\mathcal{E}^M&=(l-1)\,\partial_2P^{M-l+2}(u^1,u^2;u^3,\dots,u^{M-l+2})+(l-1)\,\partial_2f_{M-l+2}(u^1,u^2)\notag\\&-(l-2)\,\partial_1P^{M-l+1}(u^1,u^2;u^3,\dots,u^{M-l+1})-(l-2)\,\partial_1f_{M-l+1}(u^1,u^2)\text{.}
		\notag
	\end{align}
	In particular $\partial_l\mathcal{E}^M\in\mathcal{P}_{1,2}^{3,\dots,M-l+2}\subseteq\mathcal{P}_{1,2}^{3,\dots,M}$ for each $l\geq3$, thus
	\begin{align}
		\mathcal{E}^M=P^M(u^1,u^2;u^3,\dots,u^M)+f_M(u^1,u^2)
		\notag
	\end{align}
	for a function $f_M$ of $u^1,u^2$ and a function $P^M\in\mathcal{P}_{1,2}^{3,\dots,M}$ whose coefficents only depend on $f_1,\dots,f_{M-1}$.
\end{proof}

\medskip

The following example is not unexpected - but is included to show the use of the above result.

\begin{example}
	The Euler vector field
	\[
	E=\overset{n}{\underset{i=1}{\sum}}u^i\frac{\partial}{\partial u^{i}}
	\]
	is an eventual identity. In fact, given $\mathcal{E}^1(u^1)=u^1$ and $\mathcal{E}^2(u^2)=u^2$, for each ${i=3,\dots,n}$ condition \eqref{Atlas} gives
	\[
	\mathcal{E}^i(u^1,\dots,u^i)=u^i+f_i(u^1,\dots,u^{i-1})
	\]
	for some function $f_i(u^1,\dots,u^{i-1})$. In particular, for each $j=3,\dots,i-1$ we have
	\begin{align}
		\partial_jf_i&=\partial_j\mathcal{E}^i=(j-1)\partial_2\mathcal{E}^{i-j+2}-(j-2)\partial_1\mathcal{E}^{i-j+1}.
		\notag
	\end{align}
	Since $i-j+2\leq i-1$ (and a fortiori $i-j+1\leq i-1$) for each $j\geq3$, by induction we obtain 
	\begin{align}
		\partial_jf_i&=(j-1)\partial_2u^{i-j+2}-(j-2)\partial_1u^{i-j+1},\qquad j=3,\dots,i-1.
		\notag
	\end{align}
	Since $i-j+2\geq3$ and $i-j+1\geq2$ for each $j\leq i-1$, we obtain  $\partial_jf_i=0$ for each $j=3,\dots,i-1$, proving the function $f_i$ only depends on $u^1,u^2$. As expected, a first example of eventual identity is then provided by the Euler vector field (more generally, up to additive functions of $u^1$ and $u^2$).
\end{example}

\subsection{Vector fields preserving the dual structure}

The dual multiplication becomes quite complicated when expressed in the original basis,
\begin{align}
		\partial_i*\partial_j&=\overset{\sim}{c}^k_{ij}\,\partial_k,\qquad i,j,k=1,\dots,n,
		\notag
	\end{align}
where	
	\begin{align}
		\overset{\sim}{c}^i_{jk}&=(\mathcal{E}^{-1})^s c^i_{st}c^t_{jk}=(\mathcal{E}^{-1})^{i-j-k+2},\qquad i,j,k=1,\dots,n\text{.}
		\notag
	\end{align}
	
\noindent We now show that a basis may be found in which the dual multiplication $*$ has the same algebraic structure as the original $\circ$-multiplication.
\begin{lemma}
Let
\[
v_i=\mathcal{E}\circ \alpha^{i-1}\,,\qquad i=1\,,\ldots\,,n,
\]
for an arbitrary vector field $\alpha\,,$ and $\alpha^i=\alpha \circ \alpha^{i-1}$ where $\alpha^0=e\,.$ Then $v_1=\mathcal{E}$ is the unit for the dual multiplication and
\[
v_i * v_j = v_{i+j-1}\,.
\]
Moreover, if
\begin{equation}
\alpha=\sum_{i\geq 2} \alpha^i \frac{\partial~}{\partial u^i}
\label{alphaprop1}
\end{equation}
then $v_i=0$ for $i>n$ and $\{ v_i \}_{i=1\,,\ldots\,,n}$ forms a basis for the Jordan block.
\end{lemma}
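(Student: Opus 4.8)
The plan is to work pointwise in the fibre algebra of the Jordan block. By $\partial_j\circ\partial_k=\partial_{j+k-1}$ (with the convention $\partial_m=0$ for $m>n$) this algebra is the truncated polynomial algebra $\mathbb{C}[t]/(t^n)$ under the identification $\partial_k\mapsto t^{k-1}$; equivalently $\partial_k=\partial_2^{\circ(k-1)}$ with $\partial_2^{\circ 0}=e=\partial_1$. I would emphasise at the outset that only the invertibility of $\mathcal{E}$ and the associativity and commutativity of $\circ$ enter the argument — the eventual-identity equations \eqref{Atlas} are not needed — so the whole statement reduces to elementary manipulations in this fixed algebra.

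The first two assertions are then immediate. Since $\alpha^0=e$ we have $v_1=\mathcal{E}\circ e=\mathcal{E}$, and $\mathcal{E}$ is the unit for $*$ because $\mathcal{E}*X=\mathcal{E}^{-1}\circ\mathcal{E}\circ X=e\circ X=X$. For the product rule I would simply expand, using commutativity and associativity together with $\mathcal{E}^{-1}\circ\mathcal{E}=e$ and $\alpha^{i-1}\circ\alpha^{j-1}=\alpha^{i+j-2}$:
\[
v_i*v_j=\mathcal{E}^{-1}\circ v_i\circ v_j=\mathcal{E}^{-1}\circ(\mathcal{E}\circ\alpha^{i-1})\circ(\mathcal{E}\circ\alpha^{j-1})=\mathcal{E}\circ\alpha^{i+j-2}=v_{i+j-1}.
\]

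For the last assertion I would exploit the nilpotent structure forced by \eqref{alphaprop1}. The condition $\alpha=\sum_{k\geq2}\alpha^k\partial_k$ says that $\alpha$ has no $e$-component, so under the identification above $\alpha=t\,g(t)$ with $g(t)=\alpha^2+\alpha^3t+\cdots$; hence $\alpha^{i-1}$ is divisible by $t^{i-1}$. For $i>n$ this forces $\alpha^{i-1}=0$ in $\mathbb{C}[t]/(t^n)$, so $v_i=\mathcal{E}\circ\alpha^{i-1}=0$. To show $\{v_i\}_{i=1,\dots,n}$ is a basis I would track leading terms in the grading by powers of $t$ (equivalently, by the index of $\partial_k$): one has $\alpha^{i-1}=(\alpha^2)^{i-1}\partial_i+(\text{terms in }\partial_{i+1},\dots,\partial_n)$, and since invertibility of $\mathcal{E}$ forces a nonzero $e$-component, $\mathcal{E}=\mathcal{E}^1\partial_1+\cdots$ with $\mathcal{E}^1\neq0$, multiplication by $\mathcal{E}$ preserves the leading index:
\[
v_i=\mathcal{E}^1(\alpha^2)^{i-1}\partial_i+(\text{terms in }\partial_{i+1},\dots,\partial_n).
\]
Thus the transition matrix from $\{\partial_i\}$ to $\{v_i\}$ is triangular with diagonal entries $\mathcal{E}^1(\alpha^2)^{i-1}$, hence invertible and $\{v_i\}$ is a basis.

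The computations in the first two parts are routine; the only genuine step is the last, where one must fix the correct grading and verify triangularity of the transition matrix. The subtlety worth flagging is that the basis conclusion requires the leading component $\alpha^2$ to be nonzero — that is, $\alpha$ must genuinely begin at order two. If $\alpha$ started at a higher order the diagonal entries above would vanish and the $v_i$ would fail to span; I would therefore record $\alpha^2\neq0$ (a necessary, and generically satisfied, condition) explicitly.
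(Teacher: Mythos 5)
Your proof is correct and takes essentially the same approach as the paper's, which simply invokes the triangular structure of $\circ$ to conclude $v_i\in\mathrm{span}\{\partial_{u^r}:r=i,\dots,n\}$; your identification of the fibre algebra with $\mathbb{C}[t]/(t^n)$ merely makes that one-line argument explicit. Your closing caveat is well taken: the basis claim genuinely requires the leading coefficient $\alpha^2\neq 0$ (together with $\mathcal{E}^1\neq0$, which invertibility supplies), a hypothesis the lemma as stated omits but which the paper imposes immediately afterwards (``we consider $a_2\neq0$'') and uses to exhibit the nonvanishing diagonal entries $v_j^j=(a_2)^{j-1}/(\mathcal{E}^1)^{j-2}$ of the transition matrix.
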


\begin{proof}
The first part of the lemma is obvious. Note that since $\mathcal{E}$ is an eventual identity it is invertible, so the arbitrariness in $\alpha$ could be replaced by an arbitrariness in $v_2\,.$

From the triangular structure of the multiplication $\circ$ it follows, given (\ref{alphaprop1}), that
\[
v_i \in {\rm span}\left\{ \frac{\partial~}{\partial u^r}\,,r=i\,,\ldots\,,n
\right\}
\]
and hence that $v_i=0$ for $i>n$ and that $\{ v_i \}_{i=1\,,\ldots\,,n}$ forms a basis for the multiplication in the Jordan block.
\end{proof}

\subsection{Coordinates preserving the dual structure}

In this section we construct a dual coordinate system $w^1,\dots,w^n$ such that
\[
v_i=\frac{\partial}{\partial w^i},\qquad i=1,\dots,n\text{.}
\]
In general the vector fields $\{v_i\}_{i=1,\dots,n}$ do not commute, and hence to obtain this result one has to place constraints on the otherwise free vector $\alpha$ (or equivalently, on the vector $v_2$). In the semi-simple case this result is obvious, given the decomposition of the multiplication into one-dimensional blocks, and the new coordinate system is simply given by quadrature,
\[
w^i(u^i) = \int \frac{1}{\mathcal{E}^i(u^i)} du^i\,,\qquad i=1,\dots,n.
\]
For regular $F$-manifolds such a coordinate system will not, in general, exist. However, one may exploit various freedoms which do not occur in the semi-simple case to construct such a coordinate system. We begin with some general points.

While an eventual identity must, by definition, be invertible, the defining equation (\ref{Lie}) is well-defined without the requirement of invertibility. This motivates the following:

\begin{deft}
A vector field $\mathcal{E}$ satisfying equation (\ref{Lie}) is a weak-eventual identity. 
\end{deft}
\noindent 
So invertible weak-eventual identities are eventual identities. The reason for introducing this is that many of the results proved for eventual identities in \cite{DS} do not use the invertibility of vector field, and hence remain true for weak-eventual identities. For example, the product of weak-eventual identities is a weak-eventual identity. In particular, the proof of Proposition (\ref{formofE}) also does not use invertibility and hence is true for weak-eventual identities, and the following result from \cite{DS} also remains true for weak-eventual identities.
\begin{prop}
Let $\alpha$ be a weak-eventual identity, then the endomorphism $\alpha\circ$ is Nijenhuis, i.e. has vanishing Nijenhuis tensor.
\end{prop}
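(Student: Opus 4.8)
The plan is to compute the Nijenhuis torsion of $L:=\alpha\circ$ directly and show it vanishes identically, using only the weak-eventual identity equation \eqref{Lie} together with the Hertling--Manin axiom \eqref{HMeq2}; invertibility of $\alpha$ is never needed, which is precisely why the statement survives the passage to weak-eventual identities. Recall that the Nijenhuis torsion of an endomorphism $L$ is
\[
N_L(X,Y)=[LX,LY]-L[LX,Y]-L[X,LY]+L^2[X,Y],
\]
and for $L=\alpha\circ$ associativity gives $L^2[X,Y]=\alpha\circ\alpha\circ[X,Y]$. I would introduce the shorthand $P_Z(X,Y):=\mathcal{L}_Z(\circ)(X,Y)=[Z,X\circ Y]-[Z,X]\circ Y-X\circ[Z,Y]$, which is symmetric in $X,Y$ by commutativity of $\circ$. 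In this notation the defining equation \eqref{Lie} reads $P_\alpha(X,Y)=[e,\alpha]\circ X\circ Y$, and the Hertling--Manin axiom \eqref{HMeq2} reads $P_{X\circ Y}=X\circ P_Y+Y\circ P_X$.

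The key algebraic step is a commutator identity obtained by solving the definition of $P$ for a bracket: from $P_Y(\alpha,X)=[Y,\alpha\circ X]-[Y,\alpha]\circ X-\alpha\circ[Y,X]$ and antisymmetry of the Lie bracket one gets
\[
[\alpha\circ X,Y]=-P_Y(\alpha,X)+[\alpha,Y]\circ X+\alpha\circ[X,Y].
\]
Applying this with $Y$ replaced by $\alpha\circ Y$ expresses $[\alpha\circ X,\alpha\circ Y]$ through $P_{\alpha\circ Y}(\alpha,X)$, $[\alpha,\alpha\circ Y]\circ X$ and $\alpha\circ[X,\alpha\circ Y]$. At this point I would feed in the two structural conditions: the Hertling--Manin axiom gives $P_{\alpha\circ Y}(\alpha,X)=\alpha\circ P_Y(\alpha,X)+Y\circ P_\alpha(\alpha,X)$, while \eqref{Lie} gives $P_\alpha(\alpha,X)=[e,\alpha]\circ\alpha\circ X$ and, via $[\alpha,\alpha\circ Y]=P_\alpha(\alpha,Y)+\alpha\circ[\alpha,Y]$, also $[\alpha,\alpha\circ Y]=[e,\alpha]\circ\alpha\circ Y+\alpha\circ[\alpha,Y]$. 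The crucial observation is that the two terms carrying the factor $[e,\alpha]$, namely $Y\circ[e,\alpha]\circ\alpha\circ X$ and $[e,\alpha]\circ\alpha\circ Y\circ X$, coincide by commutativity and associativity of $\circ$ and therefore cancel, leaving
\[
[\alpha\circ X,\alpha\circ Y]=-\alpha\circ P_Y(\alpha,X)+\alpha\circ[\alpha,Y]\circ X+\alpha\circ[X,\alpha\circ Y].
\]

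Finally I would substitute this expression, together with the commutator identity applied to $[\alpha\circ X,Y]$ (which, after acting by $-\alpha\circ$, produces $+\alpha\circ P_Y(\alpha,X)$, $-\alpha\circ[\alpha,Y]\circ X$ and $-\alpha\circ\alpha\circ[X,Y]$), into $N_{\alpha\circ}(X,Y)$. Then the term $\alpha\circ[X,\alpha\circ Y]$ cancels against $-L[X,LY]$, the $\alpha\circ P_Y(\alpha,X)$ terms cancel in pairs, the $\alpha\circ[\alpha,Y]\circ X$ terms cancel, and the two copies of $\alpha\circ\alpha\circ[X,Y]$ cancel against $L^2[X,Y]$; everything collapses to zero, so $N_{\alpha\circ}\equiv0$. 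The only real obstacle is bookkeeping: keeping the signs straight and, above all, recognising that expanding $P_{\alpha\circ Y}(\alpha,X)$ requires the full Hertling--Manin axiom rather than merely the eventual-identity equation, and that the residual $[e,\alpha]$-contributions cancel precisely because $\circ$ is commutative and associative. Since invertibility is nowhere used, the result holds for weak-eventual identities as claimed.
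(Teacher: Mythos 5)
Your argument is correct, and it is worth noting that the paper itself gives no proof of this proposition: it simply cites the corresponding result of David--Strachan \cite{DS} and observes that the proof there never invokes invertibility, so it survives for weak-eventual identities. What you have done is reconstruct that computation from scratch. I checked the cancellations: writing $P_Z(X,Y)=\mathcal{L}_Z(\circ)(X,Y)$, your commutator identity $[\alpha\circ X,Y]=-P_Y(\alpha,X)+[\alpha,Y]\circ X+\alpha\circ[X,Y]$ is right, the expansion of $[\alpha\circ X,\alpha\circ Y]$ via $P_{\alpha\circ Y}=\alpha\circ P_Y+Y\circ P_\alpha$ and $[\alpha,\alpha\circ Y]=[e,\alpha]\circ\alpha\circ Y+\alpha\circ[\alpha,Y]$ is right, and the four groups of terms do cancel exactly as you list them (your phrase ``the two copies of $\alpha\circ\alpha\circ[X,Y]$'' is a slight miscount in the prose --- there is one such term from expanding $-L[LX,Y]$ and one from $L^2[X,Y]$ --- but this does not affect the computation). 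Two features of your version are worth highlighting. First, it makes explicit exactly where each hypothesis enters: the Hertling--Manin axiom \eqref{HMeq2} is genuinely needed to expand $P_{\alpha\circ Y}$, while equation \eqref{Lie} is used only through the fact that $P_\alpha(X,Y)=c\circ X\circ Y$ for \emph{some} vector field $c$ (here $c=[e,\alpha]$), the two $c$-terms cancelling by commutativity and associativity of $\circ$; this shows the Nijenhuis property holds for any vector field whose Lie derivative of $\circ$ is a multiple of $\circ$ in this sense. Second, it confirms transparently the paper's claim that invertibility of $\alpha$ plays no role, which is the whole point of introducing weak-eventual identities. So your proof is a valid, self-contained substitute for the citation, at the cost of some bracket bookkeeping that the paper delegates to \cite{DS}.
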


We now prove a general result that will be used to construct this coordinate system.

\begin{prop}
Let $\alpha\,,\beta_0\,,\beta_1$ be weak-eventual identities. Then
\[
[ \beta_0\circ\alpha^n, \beta_1\circ\alpha^m] = \alpha^{n+m-1} \circ\left\{ m\, [\beta_0,\beta_1\circ \alpha] + n\, [\beta_0\circ\alpha,\beta_1]\right\} - (n+m-1)\, \alpha^{m+n} \circ[\beta_0,\beta_1]\,.
\]
\end{prop}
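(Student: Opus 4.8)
The plan is to prove the identity by a double induction on the pair $(n,m)$, taking the vanishing of the Nijenhuis torsion of $\alpha\circ$ as the base case and propagating it with two ``peeling'' recursions read off from the defining equation \eqref{Lie}. Throughout I abbreviate the weak-eventual-identity equation as
\[
[\mathcal{E}, X\circ Y] = [\mathcal{E},X]\circ Y + X\circ[\mathcal{E},Y] + [e,\mathcal{E}]\circ X\circ Y,
\]
which is just \eqref{Lie} with $\mathcal{L}_\mathcal{E}(\circ)$ expanded. A preliminary observation I will use repeatedly is that $e$ is itself an eventual identity, since its dual multiplication is $\circ$ again; hence by Theorem \ref{DStheorem} $\mathcal{L}_e(\circ)=0$, i.e. $[e,\cdot]$ is a derivation of $\circ$, giving $[e,\alpha^k]=k\,\alpha^{k-1}\circ[e,\alpha]$ and $[e,\beta_0\circ\alpha^n]=[e,\beta_0]\circ\alpha^n+n\,\beta_0\circ\alpha^{n-1}\circ[e,\alpha]$. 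I also record the auxiliary identities $[\alpha,\alpha^n]=(n-1)\,[e,\alpha]\circ\alpha^n$ and $[\beta_0\circ\alpha^n,\alpha]=[\beta_0,\alpha]\circ\alpha^n-n\,[e,\alpha]\circ\beta_0\circ\alpha^n$, both consequences of the displayed equation with $\mathcal{E}=\alpha$.

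For the base case $n=m=1$ the claimed identity reads $[\beta_0\circ\alpha,\beta_1\circ\alpha]=\alpha\circ\big([\beta_0,\beta_1\circ\alpha]+[\beta_0\circ\alpha,\beta_1]\big)-\alpha^2\circ[\beta_0,\beta_1]$, which is exactly the vanishing of the Nijenhuis torsion of $\alpha\circ$ on $(\beta_0,\beta_1)$ — available from the preceding Proposition — once one uses $\alpha\circ\beta_i=\beta_i\circ\alpha$ and $(\alpha\circ)^2=\alpha^2\circ$ by commutativity and associativity. This step needs only that $\alpha$ is a weak-eventual identity.

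For the inductive step, writing $\beta_1\circ\alpha^{m+1}=\alpha\circ(\beta_1\circ\alpha^m)$ and applying the displayed equation with $\mathcal{E}=\beta_0\circ\alpha^n$ (a weak-eventual identity, being a product of such) gives
\begin{align*}
[\beta_0\circ\alpha^n,\beta_1\circ\alpha^{m+1}] &=\alpha\circ[\beta_0\circ\alpha^n,\beta_1\circ\alpha^m]+[\beta_0\circ\alpha^n,\alpha]\circ(\beta_1\circ\alpha^m)\\
&\quad+[e,\beta_0\circ\alpha^n]\circ\alpha\circ\beta_1\circ\alpha^m.
\end{align*}
A symmetric computation peeling $\alpha$ off the first slot, using $\mathcal{E}=\beta_1\circ\alpha^m$, yields the companion recursion relating $[\beta_0\circ\alpha^{n+1},\beta_1\circ\alpha^m]$ to $\alpha\circ[\beta_0\circ\alpha^n,\beta_1\circ\alpha^m]$. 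I then check that the proposed right-hand side $\Psi(n,m)$ obeys these same recursions; for the first this amounts to $\Psi(n,m+1)-\alpha\circ\Psi(n,m)=\alpha^{n+m}\circ[\beta_0,\beta_1\circ\alpha]-\alpha^{n+m+1}\circ[\beta_0,\beta_1]$, and one must show the two correction terms reproduce this.

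That cancellation is where I expect the real work to lie, and it is the only delicate point. Inserting the two auxiliary identities above, the bulk terms proportional to $[e,\alpha]$ cancel against one another by commutativity and associativity of $\circ$ (the contributions $-n\,[e,\alpha]\circ\beta_0\circ\beta_1\circ\alpha^{n+m}$ and $+n\,[e,\alpha]\circ\beta_0\circ\beta_1\circ\alpha^{n+m}$), leaving $[\beta_0,\alpha]\circ\beta_1\circ\alpha^{n+m}+[e,\beta_0]\circ\beta_1\circ\alpha^{n+m+1}$, which is precisely $\alpha^{n+m}\circ[\beta_0,\beta_1\circ\alpha]-\alpha^{n+m+1}\circ[\beta_0,\beta_1]$ after expanding $[\beta_0,\beta_1\circ\alpha]$ by the defining equation with $\mathcal{E}=\beta_0$. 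With both recursions matched, the identity propagates from $(1,1)$ to every $(n,m)$ with $n,m\geq1$: first along $n=1$ by induction on $m$, then, for each $m$, along increasing $n$. Thus the entire difficulty is the careful control and eventual disappearance of the $[e,\cdot]$ terms, which leave no trace in the final statement only because of these cancellations; the role of the hypothesis that $\beta_0,\beta_1$ (and not merely $\alpha$) are weak-eventual identities enters exactly here, through the derivation property of $[e,\cdot]$ and the expansion of $[\beta_0,\beta_1\circ\alpha]$.
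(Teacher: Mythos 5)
Your argument is correct, but it takes a genuinely different route from the paper's. The paper sets $A_{n,m}=[\beta_0\circ\alpha^n,\beta_1\circ\alpha^m]$ and uses the vanishing Nijenhuis torsion of $\alpha\circ$ not merely at a base point but as a two-index recursion, $A_{n+1,m+1}=\alpha\circ A_{n,m+1}+\alpha\circ A_{n+1,m}-\alpha^2\circ A_{n,m}$, which it solves in closed form to reduce everything to the boundary data $A_{0,m}$, $A_{n,0}$, $A_{0,0}$; these are then evaluated from the auxiliary identities $[\beta_0,\alpha^m]=(m-1)[e,\beta_0]\circ\alpha^m+m[\beta_0,\alpha]\circ\alpha^{m-1}$ and $[\beta_0,\beta_1\circ\alpha^m]=m[e,\beta_0]\circ\beta_1\circ\alpha^m+m[\beta_0,\alpha]\circ\beta_1\circ\alpha^{m-1}+[\beta_0,\beta_1]\circ\alpha^m$. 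You instead use the Nijenhuis property only for the seed $(n,m)=(1,1)$ and propagate with one-step peeling recursions obtained by applying \eqref{Lie} to the product vector fields $\beta_0\circ\alpha^n$ and $\beta_1\circ\alpha^m$ themselves; this requires the closure of weak-eventual identities under $\circ$ (stated in the paper, imported from \cite{DS}), and the substance of your proof is the cancellation of the $[e,\cdot]$ correction terms, which you carry out correctly. What the paper's route buys is that its main recursion needs only the Nijenhuis property of $\alpha\circ$ (a weaker input than the weak-eventual-identity property of the products), together with reusable closed-form boundary formulas; what yours buys is a direct check that both sides satisfy the same first-order recursion, with no need for the general formula for $[\beta_0,\beta_1\circ\alpha^m]$. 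One small point: your induction as written covers $n,m\geq1$, whereas the corollary that follows applies the proposition with $n=0$ (e.g.\ to $[v_1,v_j]$); that case follows from your $m$-recursion with $\mathcal{E}=\beta_0\circ\alpha^0=\beta_0$ starting from the tautological $A_{0,1}=A_{0,1}$, but it deserves an explicit sentence.
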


\begin{proof}
Let $A_{n,m} = [ \beta_0\circ\alpha^n, \beta_1\circ\alpha^m]\,.$ Since $\alpha\circ$ is Nijenhuis,
\[
A_{n+1,m+1}=\alpha\circ A_{n,m+1} + \alpha\circ A_{n+1,m} - \alpha^2\circ A_{n,m}
\]
and hence, by induction, that
\[
A_{n,m}= \alpha^n\circ A_{0,m} + \alpha^m \circ A_{n,0} - \alpha^{n+m}\circ A_{0,0}\,.
\]
\noindent One may also show by induction that
\[
[\beta_0,\alpha^m] = (m-1) [e,\beta_0] \circ \alpha^m + m [\beta_0,\alpha] \circ\alpha^{m-1}\,,
\]
and with this, and again by induction, that
\[
[\beta_0,\beta_1\circ\alpha^m] = m\, [e,\beta_0]\circ\beta_1\circ\alpha^m + m \, [ \beta_0,\alpha] \circ \beta_1\circ \alpha^{m-1} + [\beta_0,\beta_1]\circ\alpha^m\,.
\]
From these, the result follows.

\end{proof}

As a corollary, if we set $\beta_0=\beta_1=\mathcal{E}$ and assume that $v_2$ is a weak-eventual identity (or equivalently, that $\alpha$ is a weak-eventual identity) then the above result gives
\[
[v_i,v_j] = (j-i) \, \alpha^{i+j-3} \circ [v_1,v_2]\,.
\]
Thus if we can construct a weak-eventual identity $v_2$ that commutes with $\mathcal{E}$ (recall $v_1=\mathcal{E}$) then all the $v_i$ commute and hence, by Frobenius' Theorem, there exists dual coordinate system $w^1,\dots,w^n$ such that
\[
v_i=\frac{\partial}{\partial w^i},\qquad i=1,\dots,n\text{.}
\]
Since we know the form of (weak)-eventual identities by Proposition (\ref{formofE}) we show that $[v_1,v_2]=0$ is a well-defined set of equations for the coefficients of $v_2$. Thus if  $v_2$ is weak-eventual identity its components must satisfy \eqref{Atlas}, that is
\begin{equation}
	\partial_lv_2^m=\begin{cases}
		(l-1)\,\partial_2v_2^{m-l+2}-(l-2)\,\partial_1v_2^{m-l+1}\qquad&\text{for }l\leq m\\
		0\qquad&\text{for }l>m
	\end{cases}
	\label{v2evId}
\end{equation}
or
\begin{equation}
	\partial_la_m=\begin{cases}
		(l-1)\,\partial_2a_{m-l+2}-(l-2)\,\partial_1a_{m-l+1}\qquad&\text{for }l\leq m\\
		0\qquad&\text{for }l>m
	\end{cases}
	\label{v2evId_a}
\end{equation}
for each $m=1,\dots,n$, where we have set $a_m=v^m_2$ for notational convenience. In particular, for each $m=2,\dots,n$ the function $a_m$ must only depend on the first $m$ coordinates. Moreover, we consider $a_2\neq0$.

\medskip

We now bring together various properties of the vector fields $\{v_i\}_{i=1,\dots,n}$.

\begin{prop}
\begin{itemize}
\item[(a)]  For all $j\geq3$ and $J\geq j \,,$
\begin{eqnarray}
		v_j^J& = & -\frac{1}{\mathcal{E}^1}\overset{J-j}{\underset{s=1}{\sum}}\,v_j^{J-s}\mathcal{E}^{s+1}+\frac{1}{\mathcal{E}^1}\overset{J-1}{\underset{s=1}{\sum}}\,v^s_{j-1}v^{J-s+1}_{2}\label{FFsim}\cr
	&=&\frac{a_2}{\mathcal{E}^1}v_{j-1}^{J-1}-\frac{1}{\mathcal{E}^1}\overset{J-j}{\underset{s=1}{\sum}}\,\bigg(v_j^{J-s}\mathcal{E}^{s+1}-v_{j-1}^{J-s-1}a_{s+2}\bigg)
		\label{FFhat}
\end{eqnarray}
\noindent and the vector fields $v_1,\dots,v_n$ are linearly independent.
\item[(b)] 	For each $j=1,\dots,n$ and $J\geq j$ the function $v_j^J$ only depends on the first $J-j+2$ coordinates.

\end{itemize}

\end{prop}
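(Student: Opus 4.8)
The plan is to base the entire proof on the single algebraic identity
\[
\mathcal{E}\circ v_j = v_{j-1}\circ v_2\,,\qquad j\geq 2\,,
\]
which follows at once from $v_i=\mathcal{E}\circ\alpha^{i-1}$ together with commutativity and associativity, since both sides equal $\mathcal{E}\circ\mathcal{E}\circ\alpha^{j-1}$. Writing this in components via the product rule $(X\circ Y)^J=\sum_{s=1}^{J}X^s Y^{J-s+1}$ induced by $c^i_{jk}=\delta^i_{j+k-1}$, and isolating the $s=1$ term $\mathcal{E}^1 v_j^J$ on the left, I would divide by $\mathcal{E}^1$ (nonzero by invertibility of $\mathcal{E}$) to get
\[
v_j^J=-\f{1}{\mathcal{E}^1}\sum_{s=1}^{J-1}\mathcal{E}^{s+1}v_j^{J-s}+\f{1}{\mathcal{E}^1}\sum_{s=1}^{J}v_{j-1}^s v_2^{J-s+1}\,.
\]
Two truncations then produce \eqref{FFsim}: the first sum may be cut at $s=J-j$ because $v_j^{J-s}=0$ whenever $J-s<j$ (recall from the earlier Lemma that $v_i\in\mathrm{span}\{\partial_r:r\geq i\}$), while the $s=J$ term of the second sum vanishes since $v_2^1=a_1=\mathcal{E}^1\alpha^1=0$ by \eqref{alphaprop1}.

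To pass from \eqref{FFsim} to \eqref{FFhat} I would reindex the second sum by $s\mapsto J-s$, split off its leading $s=1$ term $a_2\,v_{j-1}^{J-1}$, shift the remaining index, and again truncate using $v_{j-1}^{J-s-1}=0$ for $s>J-j$; this is pure bookkeeping. Linear independence of $v_1,\dots,v_n$ is essentially the content of the earlier Lemma: under \eqref{alphaprop1} the matrix expressing $\{v_i\}$ in the basis $\{\partial_i\}$ is triangular with $v_i^J=0$ for $J<i$ and diagonal entries $v_i^i=\mathcal{E}^1(\alpha^2)^{i-1}$, which are nonzero because $\mathcal{E}^1\neq0$ and $a_2=\mathcal{E}^1\alpha^2\neq0$.

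For part (b) the plan is an induction on the upper index $J$ (quantifying over all lower indices $j\leq J$), using \eqref{FFhat} as the recursion for $j\geq 3$ and handling the first two layers directly: $v_1^J=\mathcal{E}^J$ depends on the first $J\leq J+1=J-j+2$ coordinates by the triangular structure of $\mathcal{E}$ established above, and $v_2^J=a_J$ depends on the first $J=J-j+2$ coordinates by \eqref{v2evId_a}. The crucial observation is that every $v$-factor on the right of \eqref{FFhat}, namely $v_{j-1}^{J-1}$, each $v_j^{J-s}$, and each $v_{j-1}^{J-s-1}$, carries a strictly smaller upper index than $J$, so the induction hypothesis applies to all of them; the remaining factors $\mathcal{E}^{s+1}$ and $a_{s+2}$ depend on the first $s+1$ and $s+2$ coordinates. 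Taking the maximum coordinate-dependence across each product and using that the sums run only up to $s=J-j$, one checks term by term that every contribution depends on at most the first $J-j+2$ coordinates, and dividing by $\mathcal{E}^1$ (a function of $u^1$ alone) introduces no new dependence.

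The main obstacle is organisational rather than conceptual: keeping the index ranges and vanishing conventions straight — which sums truncate, and why — and verifying in (b) that in each product the ``worst'' factor still respects the bound $J-j+2$. In particular, the whole accounting hinges on the sum limit $J-j$ being exactly what forces $s+1\leq J-j+1$ and $s+2\leq J-j+2$, so that the explicitly-known coefficients $\mathcal{E}^{s+1}$ and $a_{s+2}$ fit inside the claimed dependence. Once the identity $\mathcal{E}\circ v_j=v_{j-1}\circ v_2$ and the vanishing relations $v_i^I=0$ for $I<i$ and $a_1=0$ are in place, the remainder is careful induction and bookkeeping.
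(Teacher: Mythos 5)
Your proposal is correct and takes essentially the same route as the paper: part (a) is the component expansion of the identity $\mathcal{E}\circ v_j=v_{j-1}\circ v_2$ together with the vanishing relations $v_i^I=0$ for $I<i$ and $v_2^1=0$ (this is exactly the ``straightforward calculation'' the paper leaves implicit, and your formula $v_i^i=(a_2)^{i-1}/(\mathcal{E}^1)^{i-2}$ for the diagonal entries matches \eqref{vjj}), while part (b) uses \eqref{FFhat} as a recursion with the layers $j=1,2$ handled directly from \eqref{Atlas} and \eqref{v2evId_a}. The only organisational difference is that you run a single induction on the upper index $J$ over all $j$ simultaneously, whereas the paper nests an induction on $J$ inside one on $j$; both are valid and rest on the same observation that every $v$-factor on the right of \eqref{FFhat} has strictly smaller upper index and every explicit coefficient $\mathcal{E}^{s+1}$, $a_{s+2}$ fits within the first $J-j+2$ coordinates because the sums stop at $s=J-j$.
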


\begin{proof} The proof of part (a) is a straightforward calculation. From (\ref{FFhat}) it follows that, 
for each $j\geq3\,,$ 
	\begin{align}
		v_j^j&=\frac{a_2}{\mathcal{E}^1}v_{j-1}^{j-1}=\frac{(a_2)^2}{(\mathcal{E}^1)^2}v_{j-2}^{j-2}=\dots=\frac{(a_2)^{j-2}}{(\mathcal{E}^1)^{j-2}}v_{2}^{2}=\frac{(a_2)^{j-1}}{(\mathcal{E}^1)^{j-2}}\neq0
		\label{vjj}
	\end{align}
and hence that the vector fields $v_1,\dots,v_n$ are then linearly independent (assuming the leading term $a_2\neq 0$)\,.

For part (b), we show $\partial_kv_j^J=0$ for $k\geq J-j+3$. For $j=1$ and $j=2$, this trivially follows from \eqref{Atlas} and \eqref{v2evId} respectively. Let us now fix $j\geq3$. We proceed by induction over $J\geq j$.
	\begin{itemize}
		\item[$\bullet$] {\bf J=j} For each $k\geq3$ we have
		\begin{align}
			\partial_kv_j^j&\overset{\eqref{vjj}}{=}\partial_k\bigg(\frac{(a_2)^{j-1}}{(\mathcal{E}^1)^{j-2}}\bigg)=0\text{.}
			\notag
		\end{align}
		\item[$\bullet$] {\bf J-1} Let us suppose $\partial_kv_j^{\overline{J}}=0$ for each $k\geq\overline{J}-j+3$, for each $\overline{J}=j,\dots,J-1$ (for a fixed $J\geq j$).
		\item[$\bullet$] {\bf J} Let us fix $k\geq J-j+3$. We have
		\begin{align}
			\partial_kv_j^J&\overset{\eqref{FFhat}}{=}\frac{a_2}{\mathcal{E}^1}(\partial_kv_{j-1}^{J-1})-\frac{1}{\mathcal{E}^1}\overset{J-j}{\underset{s=1}{\sum}}\,\bigg((\partial_kv_j^{J-s})\mathcal{E}^{s+1}\notag\\&+v_j^{J-s}(\partial_k\mathcal{E}^{s+1})-(\partial_kv_{j-1}^{J-s-1})a_{s+2}-v_{j-1}^{J-s-1}(\partial_ka_{s+2})\bigg)
			\notag
		\end{align}
		where $\partial_kv_{j-1}^{J-1}$ and $\partial_kv_{j-1}^{J-s-1}$ vanish by induction over $j$, $\partial_kv_j^{J-s}$ vanishes by induction over $J$. By the steps $j=1$ and $j=2$, the quantities $\partial_k\mathcal{E}^{s+1}$ and $\partial_ka_{s+2}$ vanish as well for each $s\leq J-j$. Then $\partial_kv_j^J=0$.
	\end{itemize}
\end{proof}

Recall we require the vector field $v_2$ to have two properties: that it is a weak-eventual identity, and that it commutes with $\mathcal{E}\,.$
These two requirements impose conditions on the partial derivatives $\{\partial_la_m\}_{m=2,\dots,n}$ for $l\neq2$. More precisely, the condition for $v_2$ to be an eventual identity is given by \eqref{v2evId_a}\,, therefore for each ${m=3,\dots,n}$ it fixes $\{\partial_la_m\}_{l=3,\dots,m}$ in terms of $\{\partial_2a_j,\,\partial_1a_j\}_{j=2,\dots,m-1}$. 
The condition $[v_1,v_2]=0$ amounts to
	\begin{align}
		\partial_1a_m&=-\frac{1}{\mathcal{E}^1}\overset{m}{\underset{l=2}{\sum}}\big(\mathcal{E}^l\,\partial_la_m-a_l\,\partial_l\mathcal{E}^m\big)\notag\\&\overset{\eqref{Atlas}}{\underset{\eqref{v2evId_a}}{=}}-\frac{1}{\mathcal{E}^1}\overset{m}{\underset{l=2}{\sum}}\bigg((l-1)\big(\mathcal{E}^l\,\partial_2a_{m-l+2}-a_l\,\partial_2\mathcal{E}^{m-l+2}\big)\notag\\&-(l-2)\big(\mathcal{E}^l\,\partial_1a_{m-l+1}-a_l\,\partial_1\mathcal{E}^{m-l+1}\big)\bigg)
		\notag
	\end{align}
	thus for each $m=2,\dots,n$ it fixes the quantity $\partial_1a_m$ in terms of the quantities $\{\partial_2a_j,\,\partial_2\mathcal{E}^{j}\}_{j=2,\dots,m}$ and $\{\partial_1a_j,\,\partial_1\mathcal{E}^{j}\}_{j=1,\dots,m-1}$ where $a_1=0$.
	
	In order to prove that $v_2$ is both an eventual identity and commutes with $v_1=\mathcal{E}$, then, one must verify that for each $m=2,\dots,n$ the conditions \eqref{v2evId_a} and
	\begin{align}
		\partial_1a_m&=-\frac{1}{\mathcal{E}^1}\overset{m}{\underset{t=2}{\sum}}\big(\mathcal{E}^t\,\partial_ta_m-a_t\,\partial_t\mathcal{E}^m\big)
		\label{D1am}
	\end{align}
define a {\sl compatible} system of PDEs. Since we already checked the compatibility conditions for the system defining an eventual identity (\eqref{compatEJ=1} for \eqref{Atlas}, where \eqref{Atlas} rewrites as \eqref{v2evId_a} for the eventual identity $v_2$), we only need to prove that ${\partial_1\partial_la_m=\partial_l\partial_1a_m}$ for each $l=2,\dots,m$ (for $l>m$ the condition becomes trivial) for each $m=2,\dots,n$.
	
Consider the case when $m=2$. Since \eqref{v2evId_a} is vacuous, the only equation for $a_2$ is 
	\begin{align}
		\partial_1a_2&=-\frac{1}{\mathcal{E}^1}\big(\mathcal{E}^2\,\partial_2a_2-a_2\,\partial_2\mathcal{E}^2\big)
		\notag
	\end{align}
	that is
	\begin{align}
		\mathcal{E}^1\partial_1a_2+\mathcal{E}^2\,\partial_2a_2-a_2\,\partial_2\mathcal{E}^2=0.
		\label{Q}
	\end{align}
	Since this is the only equation for $a_2$, no compatibility condition is required. In other words, we automatically have
	\[
	\partial_1\partial_2a_2=\partial_2\partial_1a_2.
	\]
	We will now show how \eqref{Q} is the only non-trivial relation appearing among the compatibility conditions for the above system.
	
	Let us now consider $m=3$. We need to show that $\partial_1\partial_la_3=\partial_l\partial_1a_3$ for $l=2,3$. Let us start from $l=3$. We have
	\begin{align}
		\partial_1\partial_3a_3&=\partial_1(2\partial_2a_2)=2\partial_2\partial_1a_2=-\frac{2}{\mathcal{E}^1}\partial_2\big(\mathcal{E}^2\,\partial_2a_2-a_2\,\partial_2\mathcal{E}^2\big)=-\frac{2}{\mathcal{E}^1}\big(\mathcal{E}^2\,\partial^2_2a_2-a_2\,\partial^2_2\mathcal{E}^2\big)
		\notag
	\end{align}
	and
	\begin{align}
		\partial_3\partial_1a_3&=-\frac{1}{\mathcal{E}^1}\overset{3}{\underset{t=2}{\sum}}\partial_3\big(\mathcal{E}^t\,\partial_ta_3-a_t\,\partial_t\mathcal{E}^3\big)=-\frac{1}{\mathcal{E}^1}\big(\mathcal{E}^2\,\partial_3\partial_2a_3-a_2\,\partial_3\partial_2\mathcal{E}^3\big)\notag\\
		&=-\frac{2}{\mathcal{E}^1}\big(\mathcal{E}^2\,\partial^2_2a_2-a_2\,\partial^2_2\mathcal{E}^2\big)
		\notag
	\end{align}
	proving $\partial_1\partial_3a_3=\partial_3\partial_1a_3$. Let us now consider $l=2$. In order to prove the compatibility condition $\partial_1\partial_2a_3=\partial_2\partial_1a_3$, let us rewrite \eqref{v2evId_a} for $m=3$ (i.e. $\partial_3a_3=2\,\partial_2a_2$ for $a_3(u^1,u^2,u^3)$) as $a_3=2\,(\partial_2a_2)\,u^3+g_3(u^1,u^2)$ for some function $g_3$ of $u^1$, $u^2$. Without loss of generality, let $g_3$ vanish. We have
	\begin{align}
		\partial_1\partial_2a_3&=\partial_1(2\,(\partial^2_2a_2)\,u^3)=2\,(\partial_1\partial^2_2a_2)\,u^3
		\notag
	\end{align}
	and
	\begin{align}
		\partial_2\partial_1a_3=&-\frac{1}{\mathcal{E}^1}\overset{3}{\underset{t=2}{\sum}}\partial_2\big(\mathcal{E}^t\,\partial_ta_3-a_t\,\partial_t\mathcal{E}^3\big)\notag\\
		=&-\frac{1}{\mathcal{E}^1}\big(\partial_2\mathcal{E}^2\,\partial_2a_3+\mathcal{E}^2\,\partial^2_2a_3+\partial_2\mathcal{E}^3\,\partial_3a_3+\mathcal{E}^3\,\partial_2\partial_3a_3\notag\\
		&-\partial_2a_2\,\partial_2\mathcal{E}^3-a_2\,\partial^2_2\mathcal{E}^3-\partial_2a_3\,\partial_3\mathcal{E}^3-a_3\,\partial_2\partial_3\mathcal{E}^3\big)
		\notag
	\end{align}
	where $a_3=2\,(\partial_2a_2)\,u^3$ and $\mathcal{E}^3=(2\,\partial_2\mathcal{E}^2-\partial_1\mathcal{E}^1)\,u^3+f_3(u^1,u^2)$ for some function $f_3$ of $u^1$, $u^2$. Without loss of generality, let $f_3$ vanish. We obtain 
	\begin{align}
		\partial_2\partial_1a_3=&-\frac{2u^3}{\mathcal{E}^1}\big(\partial_2\mathcal{E}^2\,\partial_2^2a_2+\mathcal{E}^2\,\partial_2^3a_2-\partial_2^2\mathcal{E}^2\,\partial_2a_2-a_2\,\partial_2^3\mathcal{E}^2\big)
		\notag
	\end{align}
	which, by taking into account that applying $\partial_2$ twice to \eqref{Q} gives
	\begin{align}
		\mathcal{E}^1\partial_1\partial^2_2a_2+\partial_2\mathcal{E}^2\,\partial^2_2a_2+\mathcal{E}^2\,\partial^3_2a_2-\partial_2a_2\,\partial^2_2\mathcal{E}^2-a_2\,\partial^3_2\mathcal{E}^2=0,
		\label{QQQ}
	\end{align}
	becomes
	\begin{align}
		\partial_2\partial_1a_3=&2u^3\,\big(\partial_1\partial^2_2a_2\big)=\partial_1\partial_2a_3.
		\notag
	\end{align}
	
	It is possible to prove by induction over $m$ that $\partial_1\partial_la_m=\partial_l\partial_1a_m$ for $l=2,\dots,m$, starting from the above case of $m=3$. In fact, by fixing $m=3,\dots,n$ and assuming
	\begin{align}
		\partial_1\partial_la_k=\partial_l\partial_1a_k,\qquad l=2,\dots,k,
		\label{IndHpCompatm}
	\end{align}
	for each $k=3,\dots,m-1$, one can show that $\partial_1\partial_la_m=\partial_l\partial_1a_m$ for each $l=2,\dots,m$. This can be proved by using an inner induction over the difference $m-l$, starting from $m-l=0$. More precisely, starting from $l=m$ we have
	\begin{align}
		\partial_1\partial_ma_m=&\partial_1\big((m-1)\partial_2a_2\big)=(m-1)\partial_1\partial_2a_2=-\frac{m-1}{\mathcal{E}^1}\,\partial_2\big(\mathcal{E}^2\,\partial_2a_2-a_2\,\partial_2\mathcal{E}^2\big)\notag\\
		=&-\frac{m-1}{\mathcal{E}^1}\,\big(\mathcal{E}^2\,\partial^2_2a_2-a_2\,\partial^2_2\mathcal{E}^2\big)
		\notag
	\end{align}
	and
	\begin{align}
		\partial_m\partial_1a_m=&-\frac{1}{\mathcal{E}^1}\overset{m}{\underset{t=2}{\sum}}\partial_m\big(\mathcal{E}^t\,\partial_ta_m-a_t\,\partial_t\mathcal{E}^m\big)=-\frac{1}{\mathcal{E}^1}\overset{m}{\underset{t=2}{\sum}}\big(\mathcal{E}^t\,\partial_m\partial_ta_m-a_t\,\partial_m\partial_t\mathcal{E}^m\big)\notag\\
		=&-\frac{1}{\mathcal{E}^1}\overset{m}{\underset{t=2}{\sum}}\big(\mathcal{E}^t\,\partial_t((m-1)\partial_2a_2)-a_t\,\partial_t((m-1)\partial_2\mathcal{E}^2-(m-2)\partial_1\mathcal{E}^1)\big)\notag\\
		=&-\frac{m-1}{\mathcal{E}^1}\big(\mathcal{E}^2\,\partial^2_2a_2-a_2\,\partial^2_2\mathcal{E}^2\big)=\partial_1\partial_ma_m.
		\notag
	\end{align}
	By fixing $s=1,\dots,m-3$ and inductively assuming $\partial_1\partial_la_m=\partial_l\partial_1a_m$ for each $l=m-s+1,\dots,m$, one can show that $\partial_1\partial_la_m=\partial_l\partial_1a_m$ for $l=m-s$, implying $\partial_1\partial_la_m=\partial_l\partial_1a_m$ for each $l=3,\dots,m$.

	In order to prove that the same relation holds for $l=2$ as well, we consider the following result. For each $k=3,\dots,n$, let us denote by $\hat{\mathcal{P}}_k$ the set of polynomials in the variables $u^3,\dots,u^k$ with coefficients being polynomials in the derivatives (up to some positive integer order) of $a_2$ with respect to $u^1$, $u^2$.
	\begin{lemma}
		For each $m=3,\dots,n$ and $k=3,\dots,m$ the function $a_m$ can be written as
		\begin{align}
			a_m(u^1,\dots,u^m)=P^{(k)}_m(u^1,\dots,u^{m})+C^{(m-k+2)}_m(u^1,\dots,u^{m-k+2})
			\label{ampolyder_claim}
		\end{align}
		for some $P^{(k)}_m\in\hat{\mathcal{P}}_{m}$ and some function $C^{(m-k+2)}_m$ of $u^1,\dots,u^{m-k+2}$.
	\end{lemma}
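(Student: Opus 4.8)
The plan is to establish the decomposition by a double induction: an outer induction on the index $m$ and, for each fixed $m$, an inner induction on $k$ running from $k=3$ up to $k=m$. Throughout, the only structural input is the recursion \eqref{v2evId_a}, which expresses each derivative $\partial_l a_m$ with $l\geq3$ through the two lower-index quantities $\partial_2 a_{m-l+2}$ and $\partial_1 a_{m-l+1}$; combined with $a_1=0$ this is what ties every component back to $a_2$ and its derivatives. I will also use repeatedly that $\partial_1$ and $\partial_2$ map $\hat{\mathcal{P}}_m$ into itself, since they act only on the $(u^1,u^2)$-dependent coefficients and send a polynomial in the derivatives of $a_2$ to another such polynomial.

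For the base case $k=3$ I take $l=m$ in \eqref{v2evId_a}, obtaining $\partial_m a_m=(m-1)\,\partial_2 a_2$. The right-hand side is independent of $u^m$, so a single integration in $u^m$ gives $a_m=(m-1)(\partial_2 a_2)\,u^m+C_m^{(m-1)}(u^1,\dots,u^{m-1})$: the first summand is the required element $P_m^{(3)}\in\hat{\mathcal{P}}_m$ and the remainder is a function of the first $m-1$ coordinates.

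For the inner step from level $k$ to level $k+1$ I set $p=m-k+2$ and study the $u^p$-dependence of $C_m^{(p)}=a_m-P_m^{(k)}$. Differentiating and invoking \eqref{v2evId_a} in the form $\partial_p a_m=(p-1)\,\partial_2 a_{k}-(p-2)\,\partial_1 a_{k-1}$ (using $m-p+2=k$), the outer inductive hypothesis applies to $a_k$ and $a_{k-1}$, both of strictly lower index, and rewrites $\partial_2 a_k$ and $\partial_1 a_{k-1}$ as elements of $\hat{\mathcal{P}}_m$. Hence $\partial_p C_m^{(p)}\in\hat{\mathcal{P}}_m$, and integrating once in $u^p$ splits $C_m^{(p)}$ into a polynomial-in-$u^p$ part with coefficients polynomial in the derivatives of $a_2$, which I absorb into the polynomial block to form $P_m^{(k+1)}$, together with a $u^p$-independent remainder $C_m^{(p-1)}=C_m^{(m-k+1)}$ depending only on $u^1,\dots,u^{m-k+1}$. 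This is exactly the statement at level $k+1$.

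The step I expect to be the main obstacle is keeping the coefficients strictly inside $\hat{\mathcal{P}}_m$ across the integration in $u^p$: the recursion feeds in $\partial_1$ and $\partial_2$ of the lower components, and any genuinely free function surviving in $a_k$ or $a_{k-1}$ would re-enter as a coefficient that is not a polynomial in the derivatives of $a_2$, breaking membership in $\hat{\mathcal{P}}_m$ (one sees this already in $a_4$, where a nonzero $g_3$ would contribute a term $(\partial_2 g_3)\,u^3$). This is precisely why the free functions $g_3,g_4,\dots$ are normalised to zero, as in the computations preceding the statement; with that normalisation every coefficient generated by the recursion is a derivative of $a_2$, and the differentiation-integration bookkeeping closes the induction. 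The remaining care is purely in the variable ranges, so that the constant of integration in $u^p$ is genuinely a function of $u^1,\dots,u^{m-k+1}$ and of no further coordinates.
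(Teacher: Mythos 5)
Your proof is correct and follows essentially the same route as the paper: a double induction, outer over $m$ and inner over $k$, with the base case $k=3$ obtained by integrating $\partial_m a_m=(m-1)\partial_2 a_2$ and the inductive step obtained by differentiating the remainder $C^{(m-k+2)}_m$ in its top variable, invoking \eqref{v2evId_a} together with the outer hypothesis on $a_k$, $a_{k-1}$ to place that derivative in $\hat{\mathcal{P}}_m$, and integrating once. Your explicit remark that membership in $\hat{\mathcal{P}}_m$ relies on normalising the free integration functions $g_3,g_4,\dots$ to zero is a point the paper leaves implicit in the phrase ``by inductive assumption,'' and is worth keeping.
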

	\begin{proof}
		For $m=3$, by \eqref{v2evId_a} we immediately have \eqref{ampolyder_claim} for
		\[
		P^{(3)}_3(u^1,u^2,u^3)=2\,\partial_2a_2\,u^3
		\]
		and some function $C^{(2)}_3$ of $u^1,u^2$. Let us fix $m=4,\dots,n$ and assume that for each $h=3,\dots,m-1$ and $k=3,\dots,h$ there exist some $P^{(k)}_h\in\hat{\mathcal{P}}_{h}$ and some function $C^{(h-k+2)}_h$ of $u^1,\dots,u^{h-k+2}$ such that	
		\begin{align}
			a_h(u^1,\dots,u^h)=P^{(k)}_h(u^1,\dots,u^{h})+C^{(h-k+2)}_h(u^1,\dots,u^{h-k+2}).
			\label{ampolyder_claim_indhp}
		\end{align}		
		We have to show \eqref{ampolyder_claim} for each $k=3,\dots,m$. We proceed by induction over $k$, starting from $k=3$. By \eqref{v2evId_a} we have
		\[
		\partial_ma_m=(m-1)\,\partial_2a_2
		\]
		thus
		\[
		a_m=(m-1)\,\partial_2a_2\,u^m+A_m(u^1,\dots,u^{m-1})
		\]
		for some function $A_m$ of $u^1,\dots,u^{m-1}$. This proves \eqref{ampolyder_claim} for $k=3$, with
		\[
		P^{(3)}_m(u^1,\dots,u^{m})=(m-1)\,\partial_2a_2\,u^m
		\]
		and
		\[
		C^{(m-1)}_m(u^1,\dots,u^{m-1})=A_m(u^1,\dots,u^{m-1}).
		\]
		Let us fix $k=4,\dots,m$ and assume that for each $h=3,\dots,k-1$ there exist some $P^{(h)}_m\in\hat{\mathcal{P}}_{m}$ and some function $C^{(m-h+2)}_m$ of $u^1,\dots,u^{m-h+2}$ such that	
		\begin{align}
			a_m(u^1,\dots,u^m)=P^{(h)}_m(u^1,\dots,u^{m})+C^{(m-h+2)}_m(u^1,\dots,u^{m-h+2}).
			\label{ampolyder_claim_indhp_bis}
		\end{align}
		By choosing $h=k-1$ in \eqref{ampolyder_claim_indhp_bis} we obtain 
		\begin{align}
			a_m(u^1,\dots,u^m)=P^{(k-1)}_m(u^1,\dots,u^{m})+C^{(m-k+3)}_m(u^1,\dots,u^{m-k+3})
			\label{ampolyder_claim_k-1}
		\end{align}
		for some $P^{(k-1)}_m\in\hat{\mathcal{P}}_{m}$ and some function $C^{(m-k+3)}_m$ of $u^1,\dots,u^{m-k+3}$. By combining \eqref{ampolyder_claim_k-1} with \eqref{v2evId_a}, we have
		\begin{align}
			\partial_{m-k+3}C^{(m-k+3)}_m&=\partial_{m-k+3}a_m-\partial_{m-k+3}P^{(k-1)}_m\notag\\
			&=(m-k+2)\partial_2a_{k-1}-(m-k+1)\partial_1a_{k-2}-\partial_{m-k+3}P^{(k-1)}_m\in\hat{\mathcal{P}}_m
			\notag
		\end{align}
		as $a_{k-1},a_{k-2},P^{(k-1)}_m\in\hat{\mathcal{P}}_m$ by inductive assumption
		. Then
		\begin{align}
			C^{(m-k+3)}_m(u^1,\dots,u^{m-k+3})&=Q^{(m-k+3)}_m(u^1,\dots,u^m)+B_m^{(m-k+2)}(u^1,\dots,u^{m-k+2})
			\notag
		\end{align}
		for some $Q^{(m-k+3)}_m\in\hat{\mathcal{P}}_m$ and some function $B_m^{(m-k+2)}$ of $u^1,\dots,u^{m-k+2}$. Therefore we obtain 
		\begin{align}
			a_m(u^1,\dots,u^m)=P^{(k)}_m(u^1,\dots,u^{m})+C^{(m-k+2)}_m(u^1,\dots,u^{m-k+2})
			\notag
		\end{align}
		for
		\[
		P^{(k)}_m(u^1,\dots,u^{m})=P^{(k-1)}_m(u^1,\dots,u^{m})+Q^{(m-k+3)}_m(u^1,\dots,u^m)\in\hat{\mathcal{P}}_m
		\]
		and
		\[
		C^{(m-k+2)}_m(u^1,\dots,u^{m-k+2})=B_m^{(m-k+2)}(u^1,\dots,u^{m-k+2})
		\]
		proving \eqref{ampolyder_claim}.
	\end{proof}
	\bigskip\noindent
	By choosing $k=m$ in \eqref{ampolyder_claim}, for each $m=3,\dots,n$ we obtain 
	\begin{align}
		a_m(u^1,\dots,u^m)=P^{(m)}_m(u^1,\dots,u^{m})+C^{(2)}_m(u^1,u^{2}).
	\end{align}
	In particular, $a_m\in\hat{\mathcal{P}}_m$ and as a consequence
	\[
	\partial_1\partial_2a_m=\partial_2\partial_1a_m
	\]
	for each $m=3,\dots,n$.

Putting these results together gives the following.

\begin{theorem}
Let us consider a regular $F$-manifold consisting of one Jordan block of size $n$, endowed with an eventual identity $\mathcal{E}\,.$ Then there exists a weak-eventual identity $v_2$ that commutes with $\mathcal{E}$ and hence a family of commuting vector fields ${\{ v_1=\mathcal{E}\,,v_2\,,\ldots\,,v_n\}}$ and a dual regular $F$-manifold with multiplication
\[
v_i * v_j = v_{i+j-1},\qquad i,j=1,\dots,n,
\]
and a coordinate system $\{w^i\}_{i=1,\dots,n}$ such that
\[
v_i = \frac{\partial~}{\partial w^i}\,,\qquad i=1,\dots,n.
\]
\end{theorem}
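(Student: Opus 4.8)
The plan is to reduce the entire statement to the single task of producing one suitable vector field $v_2$, since every other assertion then follows mechanically from the structural results already established. Concretely, I would look for a weak-eventual identity $v_2$, with components $a_m=v_2^m$ subject to $a_1=0$ and $a_2\neq 0$, that in addition commutes with $v_1=\mathcal{E}$. The requirement that $v_2$ be a weak-eventual identity is \eqref{v2evId_a}, which fixes $\partial_l a_m$ for $l\geq 3$; the commutation requirement $[v_1,v_2]=0$ is \eqref{D1am}, which fixes $\partial_1 a_m$. Together these constitute an overdetermined first-order system whose only genuinely free equation is the single PDE \eqref{Q} for $a_2(u^1,u^2)$.

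The existence of $v_2$ is the crux, and I would obtain it from integrability. The subsystem \eqref{v2evId_a} has the same shape as \eqref{Atlas}, whose internal compatibility \eqref{compatEJ=1} is already known; the only new mixed partials to control are those pairing $\partial_1$ against $\partial_l$, namely $\partial_1\partial_l a_m=\partial_l\partial_1 a_m$ for $2\le l\le m$. These are exactly what the preceding computations establish: the cases $l\geq 3$ by the double induction on $m$ and on $m-l$, and the case $l=2$ via the Lemma exhibiting each $a_m$ as an element of $\hat{\mathcal{P}}_m$, i.e. a polynomial in $u^3,\dots,u^m$ whose coefficients are built from $u^1,u^2$-derivatives of $a_2$. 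With all compatibility conditions verified, the combined system is Frobenius-integrable, so a solution exists: one chooses any nonzero $a_2$ solving \eqref{Q} (a single first-order PDE, solvable by characteristics) and integrates upward to determine all remaining $a_m$. This yields the desired weak-eventual identity $v_2$ with $[\mathcal{E},v_2]=0$.

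Given $v_2$, I would assemble the conclusion as follows. Set $\alpha=\mathcal{E}^{-1}\circ v_2$ and $v_i=\mathcal{E}\circ\alpha^{i-1}$; because $v_2^1=a_1=0$ one has $\alpha^1=0$, so $\alpha=\sum_{i\geq 2}\alpha^i\partial_i$ and the first Lemma applies, giving $v_i*v_j=v_{i+j-1}$ together with $v_i=0$ for $i>n$. Linear independence of $v_1,\dots,v_n$, hence that they form a frame, is already recorded in \eqref{vjj}, using $a_2\neq 0$. The corollary to the commutator Proposition then gives $[v_i,v_j]=(j-i)\,\alpha^{i+j-3}\circ[v_1,v_2]=0$, so $\{v_1,\dots,v_n\}$ is a commuting frame. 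Frobenius' theorem in its commuting-frame form produces coordinates $w^1,\dots,w^n$ with $v_i=\partial/\partial w^i$, and in these coordinates the dual product carries the canonical single-Jordan-block structure constants $v_i*v_j=v_{i+j-1}$ with unit $v_1=\mathcal{E}$, exhibiting the dual as a regular $F$-manifold.

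The main obstacle is the compatibility verification underlying existence, and within it the case $l=2$. The difficulty is structural: \eqref{D1am} expresses $\partial_1 a_m$ as a sum over the lower components of $\mathcal{E}$ and of $v_2$, while the $\partial_2$-direction is precisely the undetermined seed direction, so $\partial_1\partial_2 a_m=\partial_2\partial_1 a_m$ cannot be read off directly. This is why the polynomial normal-form Lemma is needed: it collapses the identity, for every $m$, to derivatives of the single base relation \eqref{Q}, after which the differentiated consequence \eqref{QQQ} of \eqref{Q} closes the argument. Once this is in hand, the remainder of the theorem is bookkeeping resting on the already-proven algebraic and commutator identities.
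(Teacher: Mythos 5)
Your proposal is correct and follows essentially the same route as the paper: reducing everything to the existence of a weak-eventual identity $v_2$ commuting with $\mathcal{E}$, verifying the compatibility of the combined system \eqref{v2evId_a} and \eqref{D1am} with \eqref{Q} as the only genuine seed equation (the $l\geq 3$ cases by the double induction, the $l=2$ case via the polynomial lemma and \eqref{QQQ}), and then invoking the commutator corollary and Frobenius' theorem. The assembly of the frame $v_i=\mathcal{E}\circ\alpha^{i-1}$ and the appeal to \eqref{vjj} for linear independence likewise match the paper's argument.
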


\medskip

\section{The general case: multiple Jordan blocks}
Having proved various results for a single Jordan block we now consider the general case which multiple Jordan blocks, where $r\geq1$. As might be expected, the various blocks do not interact, though this is not a priori obvious. Condition \eqref{Lie} reads
\begin{align}
	&-\delta_{\beta\gamma}\partial_{(j+k-1)(\beta)}\mathcal{E}^{i(\alpha)}+\delta^\alpha_\gamma\partial_{j(\beta)}\mathcal{E}^{(i-k+1)(\alpha)}+\delta^\alpha_\beta\partial_{k(\gamma)}\mathcal{E}^{(i-j+1)(\alpha)}\notag\\&=\delta^\alpha_\beta\delta^\alpha_\gamma\overset{r}{\underset{\sigma=1}{\sum}}\partial_{1(\sigma)}\mathcal{E}^{(i-j-k+2)(\alpha)}
	\label{Lie_reg}
\end{align}
for each $\alpha,\beta,\gamma=1,\dots,r$ and for each $i=1,\dots,m_\alpha$, $j=1,\dots,m_\beta$, $k=1,\dots,m_\gamma$. If $\beta=\gamma$ in \eqref{Lie_reg} we obtain 
\begin{align}
	&-\partial_{(j+k-1)(\beta)}\mathcal{E}^{i(\alpha)}+\delta^\alpha_\beta\partial_{j(\beta)}\mathcal{E}^{(i-k+1)(\alpha)}+\delta^\alpha_\beta\partial_{k(\beta)}\mathcal{E}^{(i-j+1)(\alpha)}\notag\\&=\delta^\alpha_\beta\overset{r}{\underset{\sigma=1}{\sum}}\partial_{1(\sigma)}\mathcal{E}^{(i-j-k+2)(\alpha)}
	\notag
\end{align}
which yields
\begin{align}
	&\partial_{(j+k-1)(\beta)}\mathcal{E}^{i(\alpha)}=0
	\notag
\end{align}
for $\alpha\neq\beta$ (implying that $\mathcal{E}^{i(\alpha)}$ only depends on $u^{1(\alpha)},\dots,u^{m_\alpha(\alpha)}$) and
\begin{align}
	&-\partial_{(j+k-1)(\alpha)}\mathcal{E}^{i(\alpha)}+\partial_{j(\alpha)}\mathcal{E}^{(i-k+1)(\alpha)}+\partial_{k(\alpha)}\mathcal{E}^{(i-j+1)(\alpha)}=\partial_{1(\alpha)}\mathcal{E}^{(i-j-k+2)(\alpha)}
	\label{star}
\end{align}
for $\alpha=\beta$. If $\beta\neq\gamma$ in \eqref{Lie_reg} we obtain 
\begin{align}
	&\delta^\alpha_\gamma\partial_{j(\beta)}\mathcal{E}^{(i-k+1)(\alpha)}+\delta^\alpha_\beta\partial_{k(\gamma)}\mathcal{E}^{(i-j+1)(\alpha)}=0
	\notag
\end{align}
which trivially holds. An eventual identity must then be of the form
\[
\mathcal{E}=\overset{r}{\underset{\alpha=1}{\sum}}\overset{m_\alpha}{\underset{i=1}{\sum}}\mathcal{E}^{i(\alpha)}(u^{1(\alpha)},\dots,u^{m_\alpha(\alpha)})\frac{\partial}{\partial u^{i(\alpha)}}
\]
where the functions $\mathcal{E}^{i(\alpha)}(u^{1(\alpha)},\dots,u^{m_\alpha(\alpha)})$ are solutions to \eqref{star}. Since for each $\alpha=1,\dots,r$ condition \eqref{star} is analogous to \eqref{AlmostAtlas} and the functions $\{\mathcal{E}^{i(\alpha)}\}_{i=1,\dots,m_\alpha}$ only depend on the coordinates $u^1,\dots,u^{m_\alpha}$, the results about eventual identites from the previous section naturally extend to the general case where $r\geq1$.
\begin{theorem}
	Given a regular $F$-manifold with (generalized) canonical coordinates $\{u^{1(\alpha)},\dots,u^{m_\alpha(\alpha)}\}_{\alpha=1,\dots,r}$ where the structure constants of the product and the components of the unit vector field respectively read $c^{i(\alpha)}_{j(\beta)k(\gamma)}=\delta^\alpha_\beta\delta^\alpha_\gamma\delta^i_{j+k-1}$ and $e^{i(\alpha)}=\delta^i_1$, an eventual identity must be of the form
	\[
	\mathcal{E}=\overset{r}{\underset{\alpha=1}{\sum}}\overset{m_\alpha}{\underset{i=1}{\sum}}\mathcal{E}^{i(\alpha)}(u^{1(\alpha)},\dots,u^{m_\alpha(\alpha)})\frac{\partial}{\partial u^{i(\alpha)}}
	\]
	where for each $\alpha=1,\dots,r$ the functions $\{\mathcal{E}^{i(\alpha)}\}_{i=1,\dots,m_\alpha}$ are solutions to
	\begin{equation}
		\partial_{l(\alpha)}\mathcal{E}^{m(\alpha)}=\begin{cases}
			(l-1)\,\partial_{2(\alpha)}\mathcal{E}^{(m-l+2)(\alpha)}-(l-2)\,\partial_{1(\alpha)}\mathcal{E}^{(m-l+1)(\alpha)}\qquad&\text{for }l\leq m\\
			0\qquad&\text{for }l>m
		\end{cases}
		\label{Atlas_gen}
	\end{equation}
	for each $m=1,\dots,m_\alpha$.
\end{theorem}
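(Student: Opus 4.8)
The plan is to expand the eventual-identity condition \eqref{Lie} in the generalized canonical coordinates and thereby reduce the $r$-block problem to $r$ independent copies of the single-block problem already solved in Proposition~\ref{formofE}. First I would substitute $X=\partial_{j(\beta)}$ and $Y=\partial_{k(\gamma)}$ into \eqref{Lie}, expand the Lie derivative of the product by the Leibniz rule, and insert the structure constants $c^{i(\alpha)}_{j(\beta)k(\gamma)}=\delta^\alpha_\beta\delta^\alpha_\gamma\delta^i_{j+k-1}$ together with $e^{i(\alpha)}=\delta^i_1$. This produces the component equation \eqref{Lie_reg}, valid for all block labels $\alpha,\beta,\gamma$ and all in-block indices.

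The heart of the argument is a case analysis on the block labels $\beta,\gamma$. When $\beta=\gamma$ but $\alpha\neq\beta$, equation \eqref{Lie_reg} collapses to $\partial_{(j+k-1)(\beta)}\mathcal{E}^{i(\alpha)}=0$; taking $j=1$ and letting $k$ run over $1,\dots,m_\beta$ makes $j+k-1$ sweep every in-block index of block $\beta$, so $\mathcal{E}^{i(\alpha)}$ is independent of all coordinates of block $\beta$. Since this holds for every $\beta\neq\alpha$, each component $\mathcal{E}^{i(\alpha)}$ depends only on the coordinates $u^{1(\alpha)},\dots,u^{m_\alpha(\alpha)}$ of its own block. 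When $\beta=\gamma=\alpha$, equation \eqref{Lie_reg} becomes exactly \eqref{star}, which is the block-local analogue of \eqref{AlmostAtlas}.

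The genuinely new point---and the one the abstract flags as \lq\lq not a priori obvious\rq\rq---is that the remaining case $\beta\neq\gamma$ imposes no further constraints. Here \eqref{Lie_reg} reduces to $\delta^\alpha_\gamma\,\partial_{j(\beta)}\mathcal{E}^{(i-k+1)(\alpha)}+\delta^\alpha_\beta\,\partial_{k(\gamma)}\mathcal{E}^{(i-j+1)(\alpha)}=0$. Because $\beta\neq\gamma$, at most one of the two Kronecker deltas is non-zero; whichever term survives is a derivative of a block-$\alpha$ component taken along a coordinate direction belonging to a different block, and such a derivative already vanishes by the previous step. Hence the cross-block equations hold automatically, confirming the decoupling of the Jordan blocks.

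Once the decoupling is in place the proof closes immediately: within each block the components satisfy \eqref{star}, which is formally identical to \eqref{AlmostAtlas} with the single-block coordinates replaced by the block-$\alpha$ coordinates, and the functions involved depend only on those coordinates. Proposition~\ref{formofE}---whose proof uses neither invertibility nor the existence of other blocks---therefore applies verbatim to each $\alpha$, yielding the claimed form of $\mathcal{E}$ and the equations \eqref{Atlas_gen}. I expect the main obstacle to be precisely the $\beta\neq\gamma$ decoupling check; the within-block solution is inherited wholesale from the single-block theorem and requires no new work.
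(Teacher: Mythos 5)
Your proposal is correct and follows essentially the same route as the paper: expand \eqref{Lie} in the generalized canonical coordinates to get \eqref{Lie_reg}, split on the block labels to obtain the cross-block vanishing $\partial_{(j+k-1)(\beta)}\mathcal{E}^{i(\alpha)}=0$ for $\alpha\neq\beta$ and the in-block system \eqref{star}, check that the $\beta\neq\gamma$ case is automatic, and then invoke the single-block result. Your explicit justification of why the $\beta\neq\gamma$ equations hold (at most one Kronecker delta survives, and the surviving term is a cross-block derivative already known to vanish) is exactly the content behind the paper's terser ``which trivially holds.''
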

\begin{rem}
	Condition \eqref{Atlas_gen} gives a compatible system of PDEs, namely
	\begin{equation}
		\partial_{i(\alpha)}\partial_{j(\beta)}\mathcal{E}^{m(\gamma)}=\partial_{j(\beta)}\partial_{i(\alpha)}\mathcal{E}^{m(\gamma)},\quad i=1,\dots,m_\alpha,\,j=1,\dots,m_\beta,\,\alpha,\beta=1,\dots,r
		\label{compatEJgen}
	\end{equation}
	for each $\gamma=1,\dots,r$ and $m=1,\dots,m_\gamma$. In fact, both the sides of such condition trivially vanish as soon as at least two of the indices $\alpha,\beta,\gamma$ are different (as each of the functions $\big\{\mathcal{E}^{i(\alpha)}\big\}_{i=1,\dots,m_\alpha,\,\alpha=1,\dots,r}$ only depends on the coordinates associated to the corresponding block). The only non-trivial case is then recovered when all of the Greek indices coincide, a case in which the proof of \eqref{compatEJ=1} can be adapted to prove \eqref{compatEJgen}.
\end{rem}
\begin{prop}
	Let us fix $\alpha=1,\dots,r$. For each $m=1,\dots,m_\alpha$ the $m(\alpha)$-th component of an eventual identity $\mathcal{E}$ is a polynomial function in the variables $\{u^{i(\alpha)}\}_{i=3,\dots,m_\alpha}$ with coefficients being functions of the coordinates $u^{1(\alpha)},u^{2(\alpha)}$. In particular, $\mathcal{E}^{1(\alpha)},\dots,\mathcal{E}^{m(\alpha)}$ only depend on a function $f_{1(\alpha)}$ of the coordinate $u^{1(\alpha)}$ and  $m-1$ functions $\{f_{i(\alpha)}\}_{i=2,\dots,m_\alpha}$ of the coordinates $u^{1(\alpha)},u^{2(\alpha)}$.
\end{prop}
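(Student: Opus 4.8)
The plan is to reduce the statement to the single-block result already established (the polynomial-structure Proposition for one Jordan block), exploiting the block-decoupling recorded in the discussion following \eqref{Lie_reg}. The first step is to recall that the $\beta=\gamma$ with $\alpha\neq\beta$ instance of \eqref{Lie_reg} forces $\partial_{(j+k-1)(\beta)}\mathcal{E}^{i(\alpha)}=0$, so that each component $\mathcal{E}^{i(\alpha)}$ depends only on the coordinates $u^{1(\alpha)},\dots,u^{m_\alpha(\alpha)}$ of its own block. This is the crucial structural input, since it guarantees that the $r$ blocks do not interact and that the analysis may be carried out one block at a time.

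With this dependence in hand, the next step is to observe that in the $\alpha=\beta=\gamma$ case the right-hand sum $\sum_{\sigma}\partial_{1(\sigma)}\mathcal{E}^{(i-j-k+2)(\alpha)}$ collapses to the single term $\partial_{1(\alpha)}\mathcal{E}^{(i-j-k+2)(\alpha)}$, because $\mathcal{E}^{(i-j-k+2)(\alpha)}$ is independent of $u^{1(\sigma)}$ for $\sigma\neq\alpha$. What then remains is exactly \eqref{star}, which is formally identical to \eqref{AlmostAtlas} after the relabelling $u^i\mapsto u^{i(\alpha)}$ and $n\mapsto m_\alpha$; equivalently, \eqref{Atlas_gen} is the block-labelled copy of \eqref{Atlas} from Proposition \ref{formofE}.

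Finally, I would apply the single-block polynomial Proposition verbatim to each fixed block $\alpha$. Because both the governing equations and the intra-block variable dependencies coincide term-by-term with those of the single Jordan block treated in Section 2, the induction over $m$ carried out there transfers unchanged with $n$ replaced by $m_\alpha$ and every $u^i$ replaced by $u^{i(\alpha)}$. This yields $\mathcal{E}^{m(\alpha)}=P^{m(\alpha)}+f_{m(\alpha)}(u^{1(\alpha)},u^{2(\alpha)})$, where $P^{m(\alpha)}$ is a polynomial in $u^{3(\alpha)},\dots,u^{m(\alpha)}$ whose coefficients depend on $f_{1(\alpha)},\dots,f_{(m-1)(\alpha)}$ and their derivatives, which is precisely the asserted parametrization by one function $f_{1(\alpha)}$ of $u^{1(\alpha)}$ together with $m-1$ functions $f_{2(\alpha)},\dots,f_{m(\alpha)}$ of $u^{1(\alpha)},u^{2(\alpha)}$.

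The only point requiring real care --- and hence the main, albeit minor, obstacle --- is establishing the complete decoupling of the blocks: confirming both that the $\beta\neq\gamma$ case of \eqref{Lie_reg} holds trivially and that the $\sigma$-sum collapses as claimed. Once this decoupling is secured the proposition carries no further content beyond the single-block analysis, so no new induction is needed and the argument is purely a block-by-block transcription of the earlier result.
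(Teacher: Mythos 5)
Your proposal is correct and follows essentially the same route as the paper, whose proof is simply the remark that the single-block argument extends to the general case; you have merely made explicit the block-decoupling (the vanishing of $\partial_{(j+k-1)(\beta)}\mathcal{E}^{i(\alpha)}$ for $\alpha\neq\beta$ and the collapse of the $\sigma$-sum to the single term $\partial_{1(\alpha)}\mathcal{E}^{(i-j-k+2)(\alpha)}$) that the paper establishes in the discussion preceding the theorem and then invokes implicitly. No gaps.
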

\begin{proof}
	The argument for the case of a single Jordan block clearly extends to the more general case.
\end{proof}
\begin{rem}
	The above proposition implies that an eventual identity can be fully determined starting from $r$ functions $\big\{f_{1(\alpha)}(u^{1(\alpha)})\big\}_{\alpha=1,\dots,r}$ of one variable and $n-r$ functions $\big\{f_{i(\alpha)}(u^{1(\alpha)},u^{2(\alpha)})\big\}_{i=2,\dots,m_\alpha,\,\alpha=1,\dots,r}$ of two variables.
\end{rem}
\begin{example}
	The Euler vector field
	\[
	E=\overset{n}{\underset{i=1}{\sum}}u^i\frac{\partial}{\partial u^{i}}
	\]
	is an eventual identity for an arbitrary number and size of Jordan blocks. In fact, fixed $\alpha=1,\dots,r$ (without loss of generality, we consider $\alpha$ such that $m_\alpha\geq3$), given $\mathcal{E}^{1(\alpha)}(u^{1(\alpha)})=u^{1(\alpha)}$ and $\mathcal{E}^{2(\alpha)}(u^{2(\alpha)})=u^{2(\alpha)}$, for each $i=3,\dots,m_\alpha$ condition \eqref{Atlas_gen} gives
	\[
	\mathcal{E}^{i(\alpha)}(u^{1(\alpha)},\dots,u^{i(\alpha)})=u^{i(\alpha)}+f_{i(\alpha)}(u^{1(\alpha)},\dots,u^{(i-1)(\alpha)})
	\]
	for some function $f_{i(\alpha)}(u^{1(\alpha)},\dots,u^{(i-1)(\alpha)})$. Analogoulsy to the case of a single Jordan block, the function $f_{i(\alpha)}$ actually depends only on $u^{1(\alpha)},u^{2(\alpha)}$. As expected, a first example of eventual identity for arbitrary number of Jordan blocks is then provided by the Euler vector field.
\end{example}
\begin{rem}
	The structure constants of the dual product
	\[
	X*Y=\mathcal{E}^{-1}\circ X\circ Y,\qquad X\text{,}Y\in\mathfrak{X}(M),
	\]
	are
	\begin{align}
		\overset{\sim}{c}^{i(\alpha)}_{{j(\beta)}{k(\gamma)}}&=(\mathcal{E}^{-1})^{s(\sigma)} c^{i(\alpha)}_{s(\sigma)t(\tau)}c^{t(\tau)}_{{j(\beta)}{k(\gamma)}}=\delta^\alpha_\beta\delta^\alpha_\gamma(\mathcal{E}^{-1})^{(i-j-k+2)(\alpha)}
		\notag
	\end{align}
	for all suitable indices. The dual product is expressed on the coordinate vector fields as
	\begin{align}
		\partial_{i(\alpha)}*\partial_{j(\beta)}&=\overset{\sim}{c}^{k(\gamma)}_{{i(\alpha)}{j(\beta)}}\partial_{k(\gamma)}=\delta_{\alpha\beta}\overset{m_\alpha}{\underset{k=i+j-1}{\sum}}(\mathcal{E}^{-1})^{(k-i-j+2)(\alpha)}\partial_{k(\alpha)}\text{.}
		\notag
	\end{align}
	In particular, $\partial_{i(\alpha)}*\partial_{j(\beta)}=0$ for $\alpha\neq\beta$ and $\partial_{i(\alpha)}*\partial_{j(\alpha)}=0$ for $i+j\geq m_\alpha+2$.
\end{rem}
Analogously to the case of one block discussed in the previous section, we introduce vector fields $\{v_{1(\alpha)},\dots,v_{m_\alpha(\alpha)}\}_{\alpha=1,\dots,r}$ such that
\begin{equation}
	v_{i(\alpha)}*v_{j(\beta)}=\delta_{\alpha\beta}\,v_{(i+j-1)(\alpha)}\,\mathds{1}_{\{i+j\leq m_\alpha+1\}}\text{.}
	\notag
\end{equation}
\begin{thm}
	By setting
	\begin{itemize}
		\item[$\bullet$] $v_{1(\alpha)}=\overset{m_\alpha}{\underset{i=1}{\sum}}\,\mathcal{E}^{i(\alpha)}\partial_{i(\alpha)}$
		\item[$\bullet$] $v_{2(\alpha)}=\overset{m_\alpha}{\underset{i=2}{\sum}}\,a_{i(\alpha)}\,\partial_{i(\alpha)}$ for some functions $a_{2(\alpha)}\neq0,a_{3(\alpha)},\dots,a_{m_\alpha(\alpha)}$
		\item[$\bullet$] $v_{(i+1)(\alpha)}=\overset{m_\alpha}{\underset{k=i+1}{\sum}}\,(v_{(i+1)(\alpha)})^{k(\alpha)}\,\partial_{k(\alpha)}$ for $i\geq2$ with
		\begin{equation}
			(v_{(i+1)(\alpha)})^{k(\alpha)}=\overset{k-1}{\underset{a=i}{\sum}}\,\overset{k-a+1}{\underset{b=2}{\sum}}\,(v_{i(\alpha)})^{a(\alpha)}(v_{2(\alpha)})^{b(\alpha)}({\mathcal{E}}^{-1})^{(k-a-b+2)(\alpha)},\qquad k\geq i+1
			\notag
		\end{equation}
	\end{itemize}
	for each $\alpha=1,\dots,r$, we have
	\begin{equation}
		v_{i(\alpha)}*v_{j(\beta)}=\delta_{\alpha\beta}\,v_{(i+j-1)(\alpha)}\,\mathds{1}_{\{i+j\leq m_\alpha+1\}}
		\notag
	\end{equation}
	for each $\alpha,\beta=1,\dots,r$.
\end{thm}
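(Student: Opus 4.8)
The plan is to reduce the multi-block statement to the single-block situation already treated and then invoke the associativity and commutativity of the dual product. First I would observe that each vector field $v_{i(\alpha)}$ is supported entirely within the $\alpha$-th block, i.e. its only nonzero components carry a label $k(\alpha)$: this is immediate for $v_{1(\alpha)}$ and $v_{2(\alpha)}$ and propagates through the recursion, since the right-hand side of the defining formula for $(v_{(i+1)(\alpha)})^{k(\alpha)}$ involves only block-$\alpha$ quantities. Combined with the fact recorded in the Remark on the dual structure constants that $\partial_{a(\alpha)}*\partial_{b(\beta)}=0$ whenever $\alpha\neq\beta$, this immediately yields $v_{i(\alpha)}*v_{j(\beta)}=0$ for $\alpha\neq\beta$, which accounts for the factor $\delta_{\alpha\beta}$. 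It then suffices to fix a single block and prove $v_{i(\alpha)}*v_{j(\alpha)}=v_{(i+j-1)(\alpha)}\,\mathds{1}_{\{i+j\leq m_\alpha+1\}}$.

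Within the fixed block, I would identify the recursion as repeated dual multiplication by $v_{2(\alpha)}$. Writing the dual product in components as $(X*Y)^{k(\alpha)}=\sum_{a,b}(\mathcal{E}^{-1})^{(k-a-b+2)(\alpha)}X^{a(\alpha)}Y^{b(\alpha)}$, one checks that the stated formula for $(v_{(i+1)(\alpha)})^{k(\alpha)}$ is exactly $(v_{i(\alpha)}*v_{2(\alpha)})^{k(\alpha)}$, the summation ranges on $a$ and $b$ merely reflecting that $v_{i(\alpha)}$ starts at index $i$, that $v_{2(\alpha)}$ starts at index $2$, and that the surviving $\mathcal{E}^{-1}$-component must carry a label $\geq 1$. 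Hence $v_{(i+1)(\alpha)}=v_{i(\alpha)}*v_{2(\alpha)}$. I would also note that $v_{1(\alpha)}$ is the $*$-unit on the block: since $v_{1(\alpha)}$ is the block-$\alpha$ part of $\mathcal{E}$ and $\circ$ is block-diagonal, $v_{1(\alpha)}\circ X=\mathcal{E}\circ X$ for any $X$ supported in block $\alpha$, so $v_{1(\alpha)}*X=\mathcal{E}^{-1}\circ\mathcal{E}\circ X=X$.

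With these two facts the conclusion follows from the algebra of $*$. By induction $v_{i(\alpha)}=v_{2(\alpha)}^{*(i-1)}$ for all $i\geq 1$, where $v_{2(\alpha)}^{*0}:=v_{1(\alpha)}$ is the dual unit; this is the block-wise incarnation of the single-block Lemma of the previous section. Because $\circ$ is commutative and associative, so is $*$, and therefore $v_{i(\alpha)}*v_{j(\alpha)}=v_{2(\alpha)}^{*(i-1)}*v_{2(\alpha)}^{*(j-1)}=v_{2(\alpha)}^{*(i+j-2)}=v_{(i+j-1)(\alpha)}$ whenever the index $i+j-1$ does not exceed $m_\alpha$.

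The one point requiring genuine care is the truncation encoded in the indicator $\mathds{1}_{\{i+j\leq m_\alpha+1\}}$, and this is where I expect the main, if modest, obstacle to lie. When $i+j\geq m_\alpha+2$ the product $v_{i(\alpha)}*v_{j(\alpha)}$ would formally land in components of label $i+j-1>m_\alpha$; I would argue it vanishes identically by appealing to $\partial_{a(\alpha)}*\partial_{b(\alpha)}=0$ for $a+b\geq m_\alpha+2$, so that the dual product of two block-$\alpha$ fields starting at indices $i$ and $j$ has no surviving components once $i+j-1>m_\alpha$. Equivalently, the defining recursion produces $v_{(i+1)(\alpha)}=0$ as soon as $i+1>m_\alpha$, since the component sum over $k$ runs only up to $m_\alpha$ and is then empty. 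Keeping the index ranges in the recursion aligned with these truncations throughout the induction is the only delicate bookkeeping; the algebraic content is supplied entirely by the single-block analysis together with the associativity of $*$.
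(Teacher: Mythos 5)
Your proposal is correct and follows essentially the same route as the paper: the paper's proof is a one-line reduction to the single-block Lemma of Section 2.2, where the $v_i$ are $\mathcal{E}\circ\alpha^{i-1}$ for $\alpha=\mathcal{E}^{-1}\circ v_2$ (equivalently, your recursion $v_{(i+1)(\alpha)}=v_{i(\alpha)}*v_{2(\alpha)}$), and the result follows from associativity of $*$ together with the triangular/block-diagonal structure. You merely make explicit the bookkeeping (the identification of the summation ranges with the $*$-product, the block-unit property of $v_{1(\alpha)}$, and the truncation for $i+j>m_\alpha+1$) that the paper leaves implicit.
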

\begin{proof}
	The proof for the case $r=1$ can be adapted to the general case where $r\geq1$, as for each $\alpha=1,\dots,r$ the functions $\{\mathcal{E}^{i(\alpha)}\}_{i=1,\dots,m_\alpha}$ and $\{a_{i(\alpha)}\}_{i=2,\dots,m_\alpha}$ only depend on the coordinates $u^{1(\alpha)},\dots,u^{m_\alpha(\alpha)}$.
\end{proof}

\medskip

In particular, we assume $v_{2(\alpha)}$ to be solution to \eqref{Atlas_gen} for each $\alpha=1,\dots,r$. In other words, we are assuming
\begin{equation}
	\overset{r}{\underset{\alpha=1}{\sum}}\overset{m_\alpha}{\underset{i=2}{\sum}}a_{i(\alpha)}\frac{\partial~}{\partial u^{i(\alpha)}}
	\notag
\end{equation}
to be a weak eventual identity.
\begin{rem}
	Analogous formulas to \eqref{FFsim} apply here as well. For each $\alpha=1,\dots,r$ we have
	\begin{equation}
		v_{j(\alpha)}^{J(\alpha)}=-\frac{1}{\mathcal{E}^{1(\alpha)}}\overset{J-j}{\underset{s=1}{\sum}}\,v_{j(\alpha)}^{(J-s)(\alpha)}\mathcal{E}^{(s+1)(\alpha)}+\frac{1}{\mathcal{E}^{1(\alpha)}}\overset{J-1}{\underset{s=1}{\sum}}\,v^{s(\alpha)}_{(j-1)(\alpha)}v^{(J-s+1)(\alpha)}_{2(\alpha)}
		\label{FFsim_gen}
	\end{equation}
	for each $j\geq3$ and $J\geq j$ and
	\begin{align}
		v_{j(\alpha)}^{J(\alpha)}&=\frac{a_{2(\alpha)}}{\mathcal{E}^{1(\alpha)}}v_{(j-1)(\alpha)}^{(J-1)(\alpha)}-\frac{1}{\mathcal{E}^{1(\alpha)}}\overset{J-j}{\underset{s=1}{\sum}}\,\bigg(v_{j(\alpha)}^{(J-s)(\alpha)}\mathcal{E}^{(s+1)(\alpha)}-v_{(j-1)(\alpha)}^{(J-s-1)(\alpha)}a_{(s+2)(\alpha)}\bigg)
		\label{FFhat_gen}
	\end{align}
	for each $j\geq3$ and $J\geq j$. In particular, for each $j\geq3$ we have
	\begin{align}
		v_{j(\alpha)}^{j(\alpha)}&=\frac{a_{2(\alpha)}}{\mathcal{E}^{1(\alpha)}}v_{(j-1)(\alpha)}^{(j-1)(\alpha)}=\frac{(a_{2(\alpha)})^{(j-1)(\alpha)}}{(\mathcal{E}^{1(\alpha)})^{(j-2)(\alpha)}}\neq0\text{.}
		\label{vjj_gen}
	\end{align}
	The vector fields $v_1,\dots,v_n$ are then linearly independent.
\end{rem}
The same considerations about the independence on coordinates which correspond to different blocks lead to the following.
\begin{prop}
	For each $\alpha=1,\dots,r$ the vector fields $\{v_{i(\alpha)}\}_{i\geq3}$ are solutions to \eqref{Atlas_gen}, namely
	\begin{equation}
		\overset{r}{\underset{\alpha=1}{\sum}}\overset{m_\alpha}{\underset{I=i}{\sum}}v_{i(\alpha)}^{I(\alpha)}\frac{\partial~}{\partial u^{I(\alpha)}}
		\notag
	\end{equation}
	is a weak eventual identity for each $i\geq3$. In particular, for each $j=1,\dots,m_\alpha$ and $J\geq j$ the function $v_{j(\alpha)}^{J(\alpha)}$ only depends on the coordinates $u^{1(\alpha)},\dots,u^{(J-j+2)(\alpha)}$.	
\end{prop}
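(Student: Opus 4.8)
The plan is to prove the two assertions in turn. First I would show that each global field $v_i=\sum_{\alpha=1}^{r}\sum_{I=i}^{m_\alpha}v_{i(\alpha)}^{I(\alpha)}\partial_{I(\alpha)}$ is a weak-eventual identity, which, by the block decomposition of \eqref{Lie_reg} carried out at the start of this section, is equivalent to each $v_{i(\alpha)}$ solving \eqref{Atlas_gen}. Then I would establish the finer dependence statement. It is worth noting that the second claim is strictly stronger than the first for $j\geq3$: being a weak-eventual identity only forces the $J$-th block component to depend on $u^{1(\alpha)},\dots,u^{J(\alpha)}$, whereas we want dependence on the shorter list $u^{1(\alpha)},\dots,u^{(J-j+2)(\alpha)}$. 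The two parts therefore require separate arguments.

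For the first assertion I would exhibit $v_i$ as a $\circ$-product of weak-eventual identities. Summing the defining recursion $v_{(i+1)(\alpha)}=v_{i(\alpha)}*v_{2(\alpha)}$ over the blocks gives $v_{i+1}=v_i*v_2=\mathcal{E}^{-1}\circ v_i\circ v_2$, so, exactly as in Section 2, $v_i=\mathcal{E}\circ\alpha^{i-1}$ with $\alpha=\mathcal{E}^{-1}\circ v_2$. Now $\mathcal{E}$ is an eventual identity and hence a weak-eventual identity, while $\alpha$ is a weak-eventual identity by hypothesis (equivalently $v_2$ is). Since the $\circ$-product of weak-eventual identities is again a weak-eventual identity, a short induction on the power shows $\alpha^{i-1}$ is weak-eventual, and therefore so is $v_i=\mathcal{E}\circ\alpha^{i-1}$. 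The decomposition of \eqref{Lie_reg} then yields that the components of $v_i$ split block by block and that within each block they satisfy \eqref{star}, i.e. \eqref{Atlas_gen}; in particular every $v_{i(\alpha)}^{I(\alpha)}$ depends only on the coordinates of the $\alpha$-th block.

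For the sharper statement I would transcribe the proof of part (b) of the single-block Proposition inside a fixed block $\alpha$. The cases $j=1,2$ are immediate from \eqref{Atlas_gen} and the hypothesis that $v_{2(\alpha)}$ solves \eqref{Atlas_gen}. For $j\geq3$ I would induct on $J\geq j$, the base case $J=j$ following from \eqref{vjj_gen}, which writes $v_{j(\alpha)}^{j(\alpha)}=(a_{2(\alpha)})^{j-1}/(\mathcal{E}^{1(\alpha)})^{j-2}$, a function of $u^{1(\alpha)},u^{2(\alpha)}$ alone (consistent with $J-j+2=2$). For the inductive step I would apply $\partial_{k(\alpha)}$ with $k\geq J-j+3$ to \eqref{FFhat_gen} and check termwise vanishing: the $v_{(j-1)(\alpha)}$-terms die by the induction over $j$, the $v_{j(\alpha)}$-terms by the induction over $J$, and the factors $\partial_{k(\alpha)}\mathcal{E}^{(s+1)(\alpha)}$ and $\partial_{k(\alpha)}a_{(s+2)(\alpha)}$ vanish in the relevant range of $s$ by the cases $j=1,2$. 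This reproduces $\partial_{k(\alpha)}v_{j(\alpha)}^{J(\alpha)}=0$ for $k\geq J-j+3$, which is the claim.

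The point to watch throughout — and the only place where anything beyond the single-block computation is needed — is block-locality. Because the first assertion (together with the decomposition of \eqref{Lie_reg}) guarantees that all of $\mathcal{E}^{\bullet(\alpha)}$, $a_{\bullet(\alpha)}$ and $v_{\bullet(\alpha)}^{\bullet(\alpha)}$ depend solely on $\alpha$-block coordinates, the recursion \eqref{FFhat_gen} never couples distinct blocks; once this is confirmed, the single-block machinery of Section 2 applies block by block and the result follows. This absence of coupling is the main (and essentially only) obstacle, and it is resolved by the decomposition already established.
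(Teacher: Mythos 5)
Your argument is correct and follows the paper's intended route: the paper offers no detailed proof of this proposition, simply asserting that the single-block considerations extend block by block, and your two steps — realising $v_i=\mathcal{E}\circ\alpha^{i-1}$ as a $\circ$-product of weak-eventual identities (invoking the paper's earlier remark that such products are again weak-eventual identities) and then transcribing the double induction on \eqref{FFhat} inside each fixed block — are exactly the details being deferred to. The one point you make explicit that the paper leaves implicit is that block-locality of $\mathcal{E}^{\bullet(\alpha)}$ and $a_{\bullet(\alpha)}$ prevents \eqref{FFhat_gen} from coupling distinct blocks, which is indeed the crux of the extension.
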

Let us assume $[v_{1(\alpha)},v_ {2(\alpha)}]=0$ for each $\alpha=1,\dots,r$, that is for $m=2,\dots,m_\alpha$
\begin{align}
	\partial_1a_{m(\alpha)}&=-\frac{1}{\mathcal{E}^{1(\alpha)}}\overset{m}{\underset{l=2}{\sum}}\big(\mathcal{E}^{l(\alpha)}\,\partial_{l(\alpha)}a_{m(\alpha)}-a_{l(\alpha)}\,\partial_{l(\alpha)}\mathcal{E}^{m(\alpha)}\big)\text{.}
	\label{comm12_gen}
\end{align}
Analogously to the case of a single Jordan block, the request for $v_2$ to both be an eventual identity and to commute with $v_1=\mathcal{E}$ gives rise to a compatible system of PDEs for the components of $v_2$.
\begin{thm}
	The vector fields $\{v_{i(\alpha)}\}_{i=1,\dots,m_\alpha,\,\alpha=1,\dots,r}$ pairwise commute.
\end{thm}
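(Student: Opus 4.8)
The plan is to reduce the statement to results already in hand by splitting the family into pairs lying in the same Jordan block and pairs lying in different blocks, since these two situations are governed by entirely different mechanisms. First I would treat the cross-block case $\alpha\neq\beta$. By the Proposition preceding this theorem, each component $v_{j(\alpha)}^{J(\alpha)}$ depends only on the coordinates $u^{1(\alpha)},\dots,u^{m_\alpha(\alpha)}$ of its own block, and likewise $v_{i(\beta)}$ depends only on the $\beta$-block coordinates. Writing $v_{i(\alpha)}=\sum_I v_{i(\alpha)}^{I(\alpha)}\partial_{I(\alpha)}$ and similarly for $v_{j(\beta)}$, the bracket $[v_{i(\alpha)},v_{j(\beta)}]$ is assembled from terms of the form $v_{i(\alpha)}^{I(\alpha)}\,\partial_{I(\alpha)}(v_{j(\beta)}^{J(\beta)})$ and $v_{j(\beta)}^{J(\beta)}\,\partial_{J(\beta)}(v_{i(\alpha)}^{I(\alpha)})$. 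When $\alpha\neq\beta$ every such expression is the derivative of a function of one block's coordinates taken in a coordinate direction of the other block, and hence vanishes identically; so $[v_{i(\alpha)},v_{j(\beta)}]=0$ for all $i,j$.

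Next I would dispatch the same-block case $\alpha=\beta$, which reduces verbatim to the single-block analysis of Section 2. By construction $v_{2(\alpha)}$ is chosen to be a weak-eventual identity (a solution of \eqref{Atlas_gen}) commuting with $v_{1(\alpha)}=\mathcal{E}$, the commutation being precisely \eqref{comm12_gen}; as noted just before the theorem, imposing both conditions yields a compatible system of PDEs for the components of $v_{2(\alpha)}$, exactly as established for $r=1$, so such a $v_{2(\alpha)}$ exists and satisfies $[v_{1(\alpha)},v_{2(\alpha)}]=0$. Applying the block-$\alpha$ form of the corollary to the bracket proposition (taking $\beta_0=\beta_1=\mathcal{E}$ and the block generator as the weak-eventual identity) then gives
\[
[v_{i(\alpha)},v_{j(\alpha)}]=(j-i)\,\alpha^{\,i+j-3}\circ[v_{1(\alpha)},v_{2(\alpha)}]=0
\]
for all admissible $i,j$, so all fields attached to a fixed block commute. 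Combining the two cases yields pairwise commutation of the entire family $\{v_{i(\alpha)}\}$.

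The only genuinely new ingredient beyond the single-block theorem is the cross-block vanishing, which rests entirely on the block-wise coordinate dependence already proved; the rest is packaging. Accordingly, I expect the main obstacle to be not conceptual but bookkeeping: carefully tracking the index ranges in the same-block step and confirming that the compatibility argument for \eqref{comm12_gen} together with \eqref{Atlas_gen} transcribes unchanged from $r=1$ to general $r$. It does so precisely because, by the Proposition on block-wise dependence, every function $\mathcal{E}^{i(\alpha)}$ and $a_{i(\alpha)}$ entering those equations involves only the coordinates of its own block, so the compatibility verification decouples across blocks and is identical to the $r=1$ computation performed earlier.
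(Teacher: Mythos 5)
Your proposal is correct and follows essentially the same route as the paper: the same-block case is handled by transcribing the single-block commutativity argument (weak-eventual identity $v_{2(\alpha)}$ with $[v_{1(\alpha)},v_{2(\alpha)}]=0$ and the bracket identity), and the cross-block case vanishes trivially by the block-wise coordinate dependence. The only cosmetic issue is your reuse of $\alpha$ both as the block label and as the generating vector field in the displayed bracket formula, which should be disambiguated.
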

\begin{proof}
	For each $\alpha=1,\dots,r$, the results about the commutativity of the vector fields $\{v_{i(\alpha)}\}_{i=1,\dots,m_\alpha}$ extend as well, thus the vector fields $\{v_{i(\alpha)}\}_{i=1,\dots,m_\alpha}$ pairwise commute. Since $[v_{i(\alpha)},v_{j(\beta)}]$ trivially vanishes for $\alpha\neq\beta$, the result is proved.
\end{proof}
\begin{cor}
	There exist coordinates $\{w^{1(\alpha)},\dots,w^{m_\alpha(\alpha)}\}_{\alpha=1,\dots,r}$ such that
	\[
	v_{i(\alpha)}=\frac{\partial}{\partial w^{i(\alpha)}}
	\]
	for each $\alpha=1,\dots,r$ and $i=1,\dots,m_\alpha$.
\end{cor}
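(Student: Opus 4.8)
The plan is to derive the coordinates $\{w^{i(\alpha)}\}$ as a direct application of Frobenius' theorem to the frame $\{v_{i(\alpha)}\}$, exactly as was done for a single block in Section 2.3. Two facts are needed, and both have already been established: that the $n$ vector fields $\{v_{i(\alpha)}\}_{i=1,\dots,m_\alpha,\,\alpha=1,\dots,r}$ form a pointwise basis, and that they pairwise commute.

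First I would record the linear independence. By the Remark containing \eqref{vjj_gen}, within each block $\alpha$ the diagonal component satisfies $v_{j(\alpha)}^{j(\alpha)} = (a_{2(\alpha)})^{j-1}/(\mathcal{E}^{1(\alpha)})^{j-2} \neq 0$, using $a_{2(\alpha)} \neq 0$ and the invertibility of $\mathcal{E}$. Since each $v_{i(\alpha)}$ has nonzero components only along $\{\partial_{k(\alpha)}\}_{k\geq i}$, the matrix of all components of the $v_{i(\alpha)}$ in the coordinate frame is block-diagonal, with each block triangular and of nonvanishing determinant; hence the frame is linearly independent at every point of $M$.

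Next I would invoke the pairwise commutativity, which is exactly the statement of the Theorem immediately preceding this Corollary: $[v_{i(\alpha)}, v_{j(\beta)}] = 0$ for all admissible indices, the intra-block vanishing arising from the construction of a weak-eventual identity $v_{2(\alpha)}$ commuting with $\mathcal{E}$ (via the compatible system \eqref{v2evId_a}--\eqref{comm12_gen}) and the inter-block vanishing being automatic, since vector fields attached to distinct blocks depend on, and point along, disjoint coordinate directions. A family of $n$ pointwise independent and pairwise commuting vector fields on an $n$-manifold is a coordinate frame: by Frobenius' theorem there exist, around each point, coordinates $\{w^{i(\alpha)}\}_{i=1,\dots,m_\alpha,\,\alpha=1,\dots,r}$ with $v_{i(\alpha)} = \partial/\partial w^{i(\alpha)}$, which is the assertion.

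The main difficulty does not reside in this final step at all: it lies upstream, in verifying that the requirements that $v_{2(\alpha)}$ be a weak-eventual identity and commute with $\mathcal{E}$ together form a compatible system of PDEs, and in propagating commutativity from the pair $(v_{1(\alpha)}, v_{2(\alpha)})$ to the entire family. Once those are in hand, the only residual point worth checking is that the new coordinate system respects the Jordan-block decomposition, and this is immediate from the disjointness of coordinate dependence across blocks noted above.
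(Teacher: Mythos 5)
Your proposal is correct and follows essentially the same route as the paper: the Corollary is obtained by combining the pointwise linear independence of the frame (from the triangular structure and the nonvanishing diagonal components \eqref{vjj_gen}) with the pairwise commutativity established in the preceding Theorem, and then applying Frobenius' theorem, exactly as the paper does for a single Jordan block in Section 2.3. Your observation that the real work lies upstream, in the compatibility of the system for $v_{2(\alpha)}$ and the propagation of commutativity to the whole family, accurately reflects where the paper places its effort.
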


\section{Comments}

The results of this paper may be extended in a number of different directions. There has been a renewed study of Nijenhuis operators, where we recall that an operator $\mathcal{L}$ is called a \emph{Nijenhuis operator} if its Nijenhuis torsion
	\begin{align}
		N_\mathcal{L}:\mathfrak{X}(M)\times\mathfrak{X}(M)&\to\mathfrak{X}(M)\notag\\
		(X,Y)&\mapsto N_\mathcal{L}(X,Y):=[\mathcal{L}X,\mathcal{L}Y]-\mathcal{L}[X,\mathcal{L}Y]-\mathcal{L}[\mathcal{L}X,Y]+\mathcal{L}^2[X,Y]
		\notag
	\end{align}
vanishes (see, for example, \cite{ALimrn,BKM,LP2}). It was proved in \cite{DS} that for {\sl any} $F$-manifold with eventual identity $\mathcal{E}$ the endomorphism $\mathcal{E}\circ$ is an Nijenhuis operator. In the semi-simple case this is just a diagonal endomorphism, but for the eventual identities constructed in this paper, the Nijenhuis operator takes the block-diagonal form with each block of the form

\begin{equation}
	\mathcal{L}_\alpha=\begin{bmatrix}\mathcal{E}^{1(\alpha)}&0&0&\dots&0&0\\\mathcal{E}^{2(\alpha)}&\mathcal{E}^{1(\alpha)}&0&\dots&0&0\\
		\vdots&\vdots&\vdots&\ddots&\vdots&\vdots\\\mathcal{E}^{(m_\alpha-1)(\alpha)}&\mathcal{E}^{(m_\alpha-2)(\alpha)}&\mathcal{E}^{(m_\alpha-3)(\alpha)}&\dots&\mathcal{E}^{1(\alpha)}&0\\\mathcal{E}^{m_\alpha(\alpha)}&\mathcal{E}^{(m_\alpha-1)(\alpha)}&\mathcal{E}^{(m_\alpha-2)(\alpha)}&\dots&\mathcal{E}^{2(\alpha)}&\mathcal{E}^{1(\alpha)}
	\end{bmatrix}.\notag
\end{equation}
and using the functional freedom in the choice of eventual identity one may construct large classes of examples, and it would be of interest to see how these fall into the classification in \cite{BKM}.

Non-semisimple Frobenius manifolds with an underlying regular $F$-manifold structure may also be constructed \cite{LP1}. Finally, the analogues of the first and second structural connections of a Frobenius manifolds were studied for $F$-manifolds with an eventual identity in \cite{DS2} (see also \cite{DH19}). It would be of interest to specialise this general construction to the case of a regular $F$-manifold, deriving analogues of the Darboux-Egorov equations. These would form an interesting multi-dimensional integrable system.

\bibliographystyle{alpha}

	SARA PERLETTI: Department of Mathematics and Applications, University of Milano-Bicocca, Milano 20125, Italy; E-mail address: s.perletti1@campus.unimib.it
	
\medskip
	
	IAN A. B. STRACHAN: School of Mathematics and Statistics, University of Glasgow, Glasgow G12 8TA, UK; E-mail address: ian.strachan@glasgow.ac.uk

\end{document}